\newtheorem{theorem}{Theorem}[section]
\newtheorem{lemma}[theorem]{Lemma}
\newtheorem{proposition}[theorem]{Proposition}
\newtheorem{corollary}[theorem]{Corollary}
\newtheorem{exam}[theorem]{Example}
\newtheorem{ques}{Question}
\newcommand{\K}{\mathcal{K}}
\newcommand{\tq}{\ge_T}
\DeclareMathOperator{\cof}{cof}
\newcommand{\cl}[1]{\overline{#1}}
\newcommand{\down}[1]{\, \downarrow \! \! #1\,}
\begin{document}

\title{Directed Sets of Topology -- Tukey Representation and Rejection}

\author{Ziqing Feng and Paul Gartside}

\date{August 2023}

\maketitle

\abstract{
Every directed set is Tukey equivalent to   (a) the family of all compact subsets, ordered by inclusion, of a (locally compact) space, to
(b) a neighborhood filter, ordered by reverse inclusion, of a point (of a compact space, and of a topological group),
and to (c) the universal uniformity, ordered by reverse inclusion, of a space. 
Two directed sets are Tukey equivalent if they are cofinally equivalent in the sense that they can both be order embedded \emph{cofinally} in a third directed set.

In contrast, any totally bounded uniformity is Tukey equivalent to $[\kappa]^{<\omega}$, the collection of all finite subsets of $\kappa$, where $\kappa$ is the cofinality of the uniformity. All other Tukey types are `rejected' by totally bounded uniformities. 
Equivalently, a compact space $X$ has weight (minimal size of a base) equal to $\kappa$ if and only if the neighborhood filter of the diagonal is Tukey equivalent to $[\kappa]^{<\omega}$.

A number  of  questions from the literature are answered with the aid of the above results.

\smallskip
Keywords: Directed set, Tukey order, compact set, neighborhood filter, universal uniformity, totally bounded uniformity.

MSC Classification: 03E04, 06A07, 54D30, 54E15, 54E35, 54F05.
}

\section{Introduction}
Two directed sets are \emph{Tukey equivalent} if they are cofinally equivalent in the sense that they can both be order embedded \emph{cofinally} in a third directed set.
Tukey equivalence  \cite{Tukey} was originally introduced, early in the 20th century, as a tool to understand convergence in general topological spaces, however it was quickly seen to have
broad applicability in comparing partial orders. 
 Key to the utility of  Tukey equivalence  (and more generally, Tukey order) is that because it focuses on what happens 
cofinally  it is sufficiently coarse to allow comparison of
very different directed sets, but nevertheless preserves many order invariants. 
Fremlin \cite{Fr3} was the first to realize the
relevance of the Tukey order in analysis, showing that a fundamental result of Bartoszynski 
and Raisonnier \& Stern on additivity of the measure and category ideals
was due to the Tukey order relation between the relevant ideals.

Directed sets arise naturally in topology  and in a variety of contexts. (In this paper all topological spaces are Tychonoff.)
Here we show that every directed set can be represented, up to Tukey equivalence, by such a topological directed set. 
Specifically, Theorem~\ref{represent} states that 
for every directed set $P$ there is a locally compact space $X_P$, a compact space $K_P$ and point $x_P$, and a space $Y_P$ such that $P$ is Tukey equivalent to $\K(X)$ (all compact subsets of $X$ ordered by inclusion), $\mathcal{N}_{x_P}^{K_P}$ (all open neighborhoods of $x_P$ in $K_P$ ordered by reverse inclusion), and $\mathcal{U}_{Y_P}$ (the universal uniformity of $Y_P$ ordered by reverse inclusion).
In principle, then, the study of arbitrary directed sets up to Tukey equivalence can be restricted simply to that of $\K(X)$ (or neighborhood filters of points in a compact space, or universal uniformities). 
Fortunately the Tukey types of the directed set $\K(X)$ have been studied  intensively  \cite{Fr2, GM,  GartMama,GM2, EFHST}.

Strikingly, in the opposite direction we show, see Theorem~\ref{th:reject1}, that any totally bounded uniformity is Tukey equivalent to $[\kappa]^{<\omega}$, the collection of all finite subsets of $\kappa$, the cofinality of the uniformity - all other Tukey types are `rejected'. A more topological interpretation of this result, given in Theorem~\ref{th:reject3}, is that  a compact space $X$ has weight (minimal size of a base) equal to $\kappa$ if and only if the neighborhood filter of the diagonal is Tukey equivalent to $[\kappa]^{<\omega}$. 
Some variations and open problems on this theme are discussed in Section~\ref{ssec:VP}.

A space $X$ is said to have a \emph{$P$-base}, where $P$ is a directed set, if  every point $x$ has a neighborhood base, $\{U_p : p \in P\}$, where $U_p \subseteq U_{p'}$ if $p \ge p'$. 
It is easy to see that a space with a compatible uniformity Tukey equivalent to some $P$ has a $P$-base, and it is immediate that if a space has a $P$-base then it contains a point whose neighborhood filter is Tukey equivalent to $P$. 
It follows from the results outlined above that although in general (even in topological groups) there are spaces with a $P$-base for every $P$ (up to Tukey equivalence), for \emph{compact} spaces having a $P$-base for some $P$ is delicately balanced between `everything goes' and `everything rejected except $[\kappa]^{<\omega}$'. 
This helps explain the recent surge of interest \cite{Banakh2019, BL18, DowFeng, Feng_Pbase} in spaces with $P$-bases, especially compacta, for certain nice $P$. 
One particular outstanding question is whether compact, scattered spaces, of countable scattered height with a $P$-base where $P$ has calibre $(\omega_1,\omega)$ are necessarily countable. Theorem~\ref{th:hM} gives a positive answer under a mild additional condition.

In the final section we  apply the results above to answer a number (eleven) of  questions from the literature.

\section{Representing Directed Sets}

\subsection{Directed Sets from Topology}\label{ssec:DST}

For any space $X$, $\K(X)$ will denote the directed set of all compact subsets of $X$, ordered by inclusion.  If $A$ is a subset of $X$, then $\mathcal{N}_A^X$ will denote the neighborhood filter of $A$ in $X$, ordered by reverse inclusion. We abbreviate $\mathcal{N}_{\{x\}}^X$, the neighborhood filter of a point $x$ in $X$, to $\mathcal{N}_x^X$. Along with neighborhood filters of points, we pay particular attention to the neighborhood filter, $\mathcal{N}_\Delta^{X^2}$, of the diagonal, and certain subfilters, specifically compatible uniformities. 
Recall that a (compatible) uniformity on $X$ is a subfilter, say $\mathcal{U}$, of $\mathcal{N}_\Delta^{X^2}$ (such that for every $x$ in $X$ the family of sets $U[x]=\{y : (x,y) \in U\}$, where $U$ is in $\mathcal{U}$, is a neighborhood base at $x$ in $X$) where, for any $U$ we have that $U^{-1}=\{(y,x) : (x,y) \in U\}$ is in $\mathcal{U}$ and there is a $V$ in $\mathcal{U}$ such that $V \circ V \subseteq U$, where $V \circ V =\{(x,z) : (x,y), (y,z) \in V\}$.  
The universal uniformity, $\mathcal{U}_X$, is the finest compatible uniformity on $X$. We note that if $X$ is paracompact then the universal uniformity and neighborhood filter of the diagonal coincide.
A uniformity, $\mathcal{U}$, on $X$ is totally bounded if for every $U$ in $\mathcal{U}$ there is a finite subset $F$ of $X$ such that $\{U[x] : x \in F\}$ covers $X$. Every space has at least one compatible totally bounded uniformity. 
Every uniformity, $\mathcal{U}$, has a completion, $\widehat{\mathcal{U}}$, defined on a superset, $\widehat{X}$, of $X$, and an alternative description of totally bounded uniformities are those whose completion is compact. (See \cite{engelking} for a general reference on all topics mentioned above.)

Let $P$ and $Q$ be directed sets. Then $Q$ is a \emph{Tukey quotient} of $P$, denoted $P \tq Q$, if there is a \emph{cofinal map} $\phi$ from  $P$ to $Q$, in other words: for every cofinal subset $C$, say, of $P$ we have that $\phi(C)$ is cofinal in $Q$. Any map $\phi: P \to Q$ which is order-preserving and has cofinal image is a cofinal map. 
A map $\psi: Q \to P$ is a called a \emph{Tukey map} if for every unbounded subset, $U$ say, of $Q$ we have that $\psi(U)$ is unbounded in $P$. It turns out that there is a Tukey quotient from $P$ to $Q$ if and only if there is a Tukey map from $Q$ to $P$. 
Two directed sets $P$ and $Q$ are \emph{Tukey equivalent}, denoted $P =_T Q$, if and only if $P \tq Q$ and $Q \tq P$. (This is equivalent to the definition of Tukey equivalence stated in the introduction, namely $P$ and $Q$ embed cofinally in a third directed set.)

 \begin{lemma}\label{l:complete_Tukey}
 Let $\mathcal{U}$ be a uniformity on $X$. Then the uniformity and its completion are Tukey equivalent, $\mathcal{U} =_T \widehat{\mathcal{U}}$. 
 \end{lemma}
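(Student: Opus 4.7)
The plan is to exhibit order-preserving maps with cofinal image in both directions between $\mathcal{U}$ and $\widehat{\mathcal{U}}$, each carrying the reverse-inclusion order. Since any order-preserving map with cofinal image is a cofinal (hence Tukey) map, establishing both directions will give $\mathcal{U} =_T \widehat{\mathcal{U}}$. The whole argument will rest on two standard facts from the theory of uniform completions that I would recall at the start: (i) $X$ embeds densely in $\widehat{X}$, and the restriction map $U \mapsto U \cap X^2$ sends entourages of $\widehat{\mathcal{U}}$ to entourages of $\mathcal{U}$ and is surjective onto $\mathcal{U}$; (ii) the closures $\{\cl{V}^{\widehat{X}^2} : V \in \mathcal{U}\}$ form a base for $\widehat{\mathcal{U}}$.

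For the direction $\mathcal{U} \tq \widehat{\mathcal{U}}$, I would define $\phi : \mathcal{U} \to \widehat{\mathcal{U}}$ by $\phi(V) = \cl{V}^{\widehat{X}^2}$. If $V \supseteq V'$ (so $V \le V'$ under reverse inclusion), then $\cl{V} \supseteq \cl{V'}$, so $\phi$ is order-preserving; and by fact (ii), $\phi(\mathcal{U})$ is a base for $\widehat{\mathcal{U}}$, hence cofinal in $\widehat{\mathcal{U}}$ under reverse inclusion. For the reverse direction $\widehat{\mathcal{U}} \tq \mathcal{U}$, I would define $\psi : \widehat{\mathcal{U}} \to \mathcal{U}$ by $\psi(U) = U \cap X^2$. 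This is manifestly order-preserving, and by fact (i) its image is all of $\mathcal{U}$, hence trivially cofinal.

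There is no real obstacle beyond making sure the correspondence between entourages of $\mathcal{U}$ and of $\widehat{\mathcal{U}}$ is correctly invoked, so I would be careful to cite (or verify from \cite{engelking}) that closures of entourages really do form a base for $\widehat{\mathcal{U}}$ and that the restriction map really is surjective, rather than merely having cofinal image. Once these facts are in hand the verification is immediate, and the two cofinal maps $\phi$ and $\psi$ together yield $\mathcal{U} =_T \widehat{\mathcal{U}}$.
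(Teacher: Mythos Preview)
Your proposal is correct and matches the paper's proof essentially verbatim: the paper also defines $\phi(U)=\cl{U}$ (closure in $\widehat{X}^2$) and $\psi(\widehat U)=\widehat U\cap X^2$, and observes that both are order-preserving with cofinal image. The only difference is that you spell out more explicitly the two standard facts about completions that underlie the argument.
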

To see this, 
define $\phi : \mathcal{U} \to \widehat{\mathcal{U}}$ by $\phi(U)=\cl{U}$ where the closure is taken in $\widehat{X}$, and $\psi:\widehat{\mathcal{U}} \to \mathcal{U}$ by $\psi(\widehat{U})=\widehat{U} \cap X^2$. 
Then $\phi$ and $\psi$ are order-preserving and have cofinal image.

Note that in a topological group $G$, with identity element $e$, the left translates (say) of neighborhoods of the identity gives a compatible uniformity, $\mathcal{U}_L$, which is order isomorphic to $\mathcal{N}_e^G$.
Also, recall that in a compact space all compatible uniformities are equal, and coincide with the family of all neighborhoods of the diagonal. 
Now we show that \emph{any} directed set can be represented, up to Tukey equivalence, by each of the types of directed sets arising in topology introduced above. 

\begin{theorem}\label{represent}
Let $P$ be a directed set. 

(1) $P =_T \K(X_P)$ for some locally compact Hausdorff space $X_P$.

(2) $P =_T \mathcal{N}_{x_P}^{K_P}$ for some compact Hausdorff space $K_P$ and point $x_P$ in $K_P$.

(3) $P =_T \mathcal{N}_e^{G_P}$ for some topological group $G_P$ with identity $e$.

(4) $P \times \omega =_T \mathcal{N}_0^{L_P}$ for some locally convex topological vector space $L_P$.

(5) $P =_T \mathcal{U}_{Y_P}$, the fine uniformity, for some space $Y_P$.

(6) $P =_T \mathcal{N}_\Delta^{Y_P^2}$, for some space $Y_P$.
\end{theorem}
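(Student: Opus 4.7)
The overall strategy is to prove (1) by a direct construction of a locally compact Hausdorff space $X_P$, and then to derive the remaining parts from it (or by parallel constructions) using standard topological machinery. The heart of the argument is therefore the construction in (1), after which (2) is immediate by one-point compactification, (4) comes from a function-space construction over $X_P$, and (3), (5), (6) follow by further refinements.

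For (1), the aim is to topologize a set derived from $P$ (containing $P$ together with suitable ``limit points'') so that the resulting space $X_P$ is locally compact Hausdorff and comes equipped with an order-preserving family $\{K_p : p \in P\} \subseteq \K(X_P)$ with $\bigcup_p K_p = X_P$ such that every compact subset of $X_P$ is contained in some $K_p$. The map $p \mapsto K_p$ then witnesses $P \ge_T \K(X_P)$, while a reverse assignment $K \mapsto p_K$ (where $p_K$ dominates the $K_p$-indices meeting $K$) gives a Tukey map in the other direction. Note that naive topologies on $P$ itself do not work: the discrete topology gives $\K(X_P) =_T [P]^{<\omega}$, which is usually strictly above $P$, while Alexandrov-type order topologies fail to be Hausdorff.

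With (1) in hand, part (2) is immediate: let $K_P = X_P \cup \{\infty\}$ be the one-point compactification and $x_P = \infty$. Then $K_P$ is compact Hausdorff, and open neighborhoods of $\infty$ are precisely the sets $\{\infty\} \cup (X_P \setminus K)$ for $K \in \K(X_P)$, so $\mathcal{N}_{x_P}^{K_P}$ is order-isomorphic to $\K(X_P) =_T P$. For (4), let $L_P = C_c(X_P)$ be the space of continuous real-valued functions on $X_P$ with the compact-open topology; this is a locally convex Hausdorff topological vector space whose basic $0$-neighborhoods $\{f : |f| < \varepsilon \text{ on } K\}$ are indexed by $\K(X_P) \times \omega$, giving $\mathcal{N}_0^{L_P} =_T \K(X_P) \times \omega =_T P \times \omega$. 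For (3), starting from the pointed compact space $(K_P, x_P)$ from (2), I would use a free topological group construction with $x_P$ collapsed to the identity (taking care that no extra countable factor sneaks in). For (5) and (6), I would construct $Y_P$ as a paracompact variant of $X_P$, so that by the remark in \S\ref{ssec:DST} the fine uniformity $\mathcal{U}_{Y_P}$ coincides with $\mathcal{N}_\Delta^{Y_P^2}$; one then shows that entourages of this common filter are cofinally indexed by $\K(Y_P) =_T \K(X_P) =_T P$, perhaps invoking Lemma~\ref{l:complete_Tukey} to pass between totally bounded subuniformities and completions as needed.

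The main obstacle is unquestionably the construction in Part (1): finding a topology on a set built from $P$ that is simultaneously locally compact Hausdorff \emph{and} has compact subsets cofinally indexed by $P$ (rather than by $[P]^{<\omega}$). Part (3) is a secondary difficulty, because standard group completions of a pointed compact space tend to introduce an auxiliary countable factor at the identity; one needs a construction tight enough to preserve Tukey type exactly, not just up to $P \times \omega$. Parts (2), (4), (5), (6) should then fall out with modest additional work once (1) is in place.
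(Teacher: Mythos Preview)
Your overall architecture matches the paper's: (1) is the engine, (2) is one-point compactification, (4) is $C_k(X_P)$, and (5)/(6) come from a paracompact space built around a single distinguished point. But you have not actually carried out (1), and your proposed routes for (3) and (5)/(6) diverge from the paper in ways worth flagging.

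For (1) you correctly diagnose why the discrete and Alexandrov topologies fail, but you stop short of a construction; this is the genuine gap. The paper's device is to take $D(P)=P$ with the discrete topology, pass to $\beta D(P)$, set $K_p=\cl{\down{p}}$ (closure in $\beta D(P)$), and let $X_P=\bigcup_{p} K_p$. Each $K_p$ is compact \emph{and open} in $X_P$, so $X_P$ is locally compact Hausdorff; $p\mapsto K_p$ is an order-embedding; and cofinality holds because $\{K_p:p\in P\}$ is an open cover, so any compact $K\subseteq X_P$ lies in finitely many $K_{p_1},\dots,K_{p_n}$ and hence in $K_p$ for any upper bound $p$ of $p_1,\dots,p_n$. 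The Stone--\v{C}ech step is precisely the ``suitable limit points'' you were looking for.

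For (3) your free-topological-group route is not what the paper does and carries exactly the risk you flag (an unwanted $\omega$-factor near the identity). The paper instead exploits that $X_P$ is zero-dimensional and takes $G_P=C_k(X_P,\mathbb{Z}_2)$ under coordinatewise addition: basic identity-neighborhoods are $B(K)=\{f:f(K)=0\}$ with no $\varepsilon$ involved, and $K\mapsto B(K)$ is an order-isomorphism of $\K(X_P)$ onto a cofinal family in $\mathcal{N}_{\mathbf{0}}^{G_P}$.

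For (5)/(6) the paper does not go through $\K(Y_P)$ or Lemma~\ref{l:complete_Tukey}. It takes the compact pointed space $(K_P,x_P)$ from (2), isolates every point except $x_P$ to obtain $Y_P$, notes that $Y_P$ is paracompact (so $\mathcal{U}_{Y_P}=\mathcal{N}_\Delta^{Y_P^2}$), and observes that $\{\Delta\cup U^2 : U\in \mathcal{N}_{x_P}^{Y_P}\}$ is cofinal in $\mathcal{N}_\Delta^{Y_P^2}$ and order-isomorphic to $\mathcal{N}_{x_P}^{K_P}=_T P$. Your suggestion that entourages be indexed by $\K(Y_P)$ would require a separate argument that $\K(Y_P)=_T P$, which the paper neither proves nor needs.
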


\begin{proof} First we establish (1). Let $D(P)$ be $P$ with the discrete topology. For a $p$ in $P$ let $K_p =\cl{\down{p}}$ where $\down{p}=\{ p' \in P : p' \le p\}$ and the closure is in $\beta D(P)$. Note that $K_p$ is compact and open. 
Let $X_P = \bigcup \{K_p : p \in P\}$ considered as a subspace of $\beta D(P)$. Then $X_P$ is locally compact and Hausdorff.
The map $\phi(p)= K_p$ is an order-embedding of $P$ into $\K(X_P)$ whose image is cofinal. To see cofinality note that $\{K_p : p \in P\}$ is an open cover of $X_P$ so any compact subset, $K$ say, is contained in finitely many $K_{p_1},\ldots, K_{p_n}$, and now if $p$ is an upper bound of $p_1, \ldots, p_n$ then $K \subseteq \phi(p)$. 
Hence $P =_T \K(X_P)$.

Now for (2). By (1) we know  there is a locally compact Hausdorff space $X_P$ such that $P$ and $\K(X_P)$ are Tukey equivalent. Let $K_P$ be the one-point compactification of $X_P$, and let $x_P$ be the point at infinity. Then clearly $(\K(X_P),\subseteq)$ and $(\mathcal{N}_{x_P}^{K_P},\supseteq)$ are order isomorphic, and so Tukey equivalent.

For (3), let $X_P$ be the space in part (1), so $\K(X_P) =_T P$, and note it is zero dimensional. Set $G_P=C_k(X_P, \mathbb{Z}_2)$, the group under co-ordinatewise addition modulo $2$ of all continuous maps of $X_P$ into the discrete two point space, with the compact-open topology. 
The identity of $G_P$ is $\mathbf{0}$, the constant zero function. Standard basic open neighborhoods of $\mathbf{0}$ have the form $B(K)=\{ f \in C(X,\mathbb{Z}_2) : f(K)=0\}$, for compact subsets $K$ of $X_P$. 
Clearly, if $K \subseteq K'$ then $B(K) \supseteq B(K')$, and if $x \in K \setminus K'$ then the characteristic function of a clopen neighborhood of $x$ disjoint from $K'$ is in $B(K')$ but not $B(K)$. 
Thus $\phi : \K(X_P) \to \mathcal{N}_\mathbf{0}^{G_P}$ is an order isomorphism of $(\K(X_P), \subseteq)$ with a cofinal subset of $(\mathcal{N}_\mathbf{0}^{G_P},\supseteq)$, and thus this latter directed set is Tukey equivalent to $P$.

For (4), let $X_P$ be the space of part (1), and set $L_P=C_k(X_P)$. As shown in \cite{GartMorgan_PN}  $\mathcal{N}_0^{L_P} =_T \K(X_P) \times \omega$, and the claim follows.

Finally, we establish (5) and (6). We can suppose that $P$ is the neighborhood filter, $\mathcal{N}_{x_P}^{X_P}$, of part (2). 
Rename the point $x_P$ to be $y_P$.  
Let $Y_P$ have underlying set $X_P$, and topology obtained from $X_P$ by isolating all points except $y_p$. 
Then $\mathcal{N}_{y_p}^{Y_P} =_T P$.   
Now $Y_P$ is paracompact, so $\mathcal{U}_{Y_P} = \mathcal{N}_{\Delta}^{Y_P^2}$. The collection of all $\Delta \cup U^2$ where $U$ is in $\mathcal{N}_{y_P}^{Y_P}$, is cofinal in $\mathcal{N}_{y_p}^{Y_P}$ and order isomorphic to $P$. Claims (5) and (6) follow. 
\end{proof}

Regarding part~(4) above we know (see \cite{GM2}) which directed sets $Q$ are Tukey equivalent to some $P \times \omega$.
 \begin{lemma} Let $Q$ be a directed set. Then the following are equivalent: (i) $Q=_T P \times \omega$ for some directed set $P$, (ii) $Q \tq \omega$, and (iii) $Q$ is not countably directed.
 \end{lemma}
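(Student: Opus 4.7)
The plan is to prove the cycle (i)$\Rightarrow$(ii)$\Leftrightarrow$(iii)$\Rightarrow$(i). The direction (i)$\Rightarrow$(ii) is immediate: if $Q=_T P\times\omega$, the second-coordinate projection $P\times\omega\to\omega$ is an order-preserving surjection, hence a cofinal map, so $Q\tq P\times\omega\tq\omega$.

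For (ii)$\Leftrightarrow$(iii), I exploit that a subset of $\omega$ is unbounded iff it is infinite. If $Q\tq\omega$, pick a Tukey map $\sigma\colon\omega\to Q$; its image is a countable unbounded subset of $Q$, so $Q$ is not countably directed. Conversely, given a countable unbounded $\{q_n:n\in\omega\}\subseteq Q$, use directedness to replace $q_n$ by an upper bound $q'_n$ of $\{q_0,\dots,q_n\}$. The sequence $(q'_n)$ is non-decreasing and shares its set of upper bounds with $(q_n)$, hence remains unbounded, and $n\mapsto q'_n$ is Tukey because every infinite $I\subseteq\omega$ gives $\{q'_n:n\in I\}$ cofinal in $(q'_n)$ and so unbounded.

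For (iii)$\Rightarrow$(i), I will take $P=Q$ and show $Q=_T Q\times\omega$. The inequality $Q\times\omega\tq Q$ is witnessed by the first-coordinate projection. For $Q\tq Q\times\omega$ I construct a Tukey map $\tau\colon Q\times\omega\to Q$: using (ii), fix a non-decreasing unbounded sequence $(q_n)$ in $Q$, and define $\tau(q,n)$ to be any upper bound in $Q$ of $q$ and $q_n$. To check the Tukey condition, let $U\subseteq Q\times\omega$ be unbounded. Since a subset of a product is bounded iff both coordinate projections are, either $\pi_Q(U)$ is unbounded in $Q$ or $\pi_\omega(U)$ is infinite. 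In the first case each $\tau(q,n)\in\tau(U)$ dominates the corresponding $q\in\pi_Q(U)$, so any bound for $\tau(U)$ would bound $\pi_Q(U)$, a contradiction. In the second case $\tau(U)$ dominates the infinite subsequence $\{q_n:n\in\pi_\omega(U)\}$, which is cofinal in $(q_n)$ and thus unbounded.

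The only substantive step is (iii)$\Rightarrow$(i), where the insight is that one can take $P=Q$ itself: once $Q$ dominates $\omega$, the $\omega$-factor of $Q\times\omega$ can be absorbed back into $Q$ using the unbounded sequence, making the shape of $\tau$ almost forced. No genuine obstacle arises beyond recognizing this and the product-unboundedness dichotomy that splits the verification into the two clean cases.
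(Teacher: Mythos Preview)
Your argument is correct. The paper does not actually prove this lemma; it simply cites \cite{GM2} for the result, so there is no in-paper proof to compare against. Your cycle (i)$\Rightarrow$(ii)$\Leftrightarrow$(iii)$\Rightarrow$(i) is clean, and the key move of taking $P=Q$ in (iii)$\Rightarrow$(i) is exactly the standard one: once $Q\tq\omega$, the $\omega$-factor is absorbed by interleaving an unbounded monotone sequence into the Tukey map $\tau(q,n)=\text{(upper bound of }q,q_n)$. One minor phrasing point: in (iii)$\Rightarrow$(ii) you say the sequence $(q'_n)$ ``shares its set of upper bounds with $(q_n)$'', but only one inclusion holds (every bound of $\{q'_n\}$ bounds $\{q_n\}$, not conversely); fortunately that one inclusion is all you use, so the argument stands.
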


\subsection{Cofinality, Calibres, and Other Properties of $\K(X)$}\label{ssec:CCK}

The \emph{cofinality} of a directed set $P$, denoted $\cof(P)$, is the minimal size of a cofinal subset of $P$. 
If $P \tq Q$ then $\cof(P) \ge \cof(Q)$. Indeed, $\cof(P) \le \kappa$ if and only if $[\kappa]^{<\omega} \tq P$. 
A directed set $P$ has \emph{calibre $(\mu,\lambda)$} where $\mu$ and $\lambda$ are cardinals, $\mu$ regular, if every $\mu$-sized subset of $P$ contains a $\lambda$-sized subset which is bounded. If $P \tq Q$ and $P$ has calibre $(\mu,\lambda)$ then so does $Q$. 
Indeed, $P \not\tq [\mu]^{<\omega}$ if and only if $P$ has calibre $(\mu,\omega)$.

A \emph{topological directed set} is a directed set with a topology.  For example, if $X$ is any space, then $\K(X)$ is directed as usual by inclusion,  and is naturally equipped with the Vietoris topology.  A topological directed set $P$ is said to be \emph{KSB} (compact sets bounded) if every compact subset of $P$ is bounded (above), and \emph{DK}  (down sets compact) if every down set, $\down{p} = \{p' \in P : p' \le p\}$, of $P$ is compact. 
Observe that $\K(X)$ is KSB and DK.

\begin{lemma} \label{l:ce_to_om1om}
Let $Q$ be a topological directed set. 
If $Q$ is locally compact, $e(Q) \le \kappa$,  and $Q$ is KSB, then $Q$ has calibre $(\kappa^+,\omega)$.
\end{lemma}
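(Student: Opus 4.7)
The plan is to show that every $\kappa^+$-sized subset $A$ of $Q$ contains a countably infinite subset with an upper bound in $Q$. The proof will combine all three hypotheses in sequence: the extent bound will produce a limit point of $A$, local compactness will confine a tail of $A$ inside a compact neighborhood, and KSB will convert compactness into boundedness.

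Concretely, I would first argue that any $A \subseteq Q$ with $|A| = \kappa^+$ must have a limit point $q_0 \in Q$. If not, then for every $q \in Q$ there is an open neighborhood meeting $A$ in at most the singleton $\{q\}$, which makes $A$ both closed and discrete in $Q$. This forces $|A| \le e(Q) \le \kappa$, contradicting $|A| = \kappa^+$. Hence a limit point $q_0$ exists.

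Next, using local compactness of $Q$, choose a compact neighborhood $U$ of $q_0$. Because $q_0$ is a limit point of $A$ and $U$ is a neighborhood of $q_0$, the intersection $A \cap U$ is infinite. Select any countably infinite $B \subseteq A \cap U$. Then $B$ lies inside the compact set $U$, so in particular $B$ itself is a subset of a compact set. Invoking the KSB property (every compact subset of $Q$ is bounded above), we obtain an upper bound for $U$ in $Q$, which is then also an upper bound for $B$. This exhibits a countable bounded subset of $A$, establishing calibre $(\kappa^+, \omega)$.

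I don't anticipate a real obstacle here; the content of the argument is already packaged in the hypotheses, and the proof is essentially a three-line chain \emph{extent bound} $\Rightarrow$ \emph{limit point} $\Rightarrow$ \emph{compact neighborhood} $\Rightarrow$ \emph{KSB gives a bound}. The one subtle point worth writing out carefully is the equivalence between ``$A$ has no limit point in $Q$'' and ``$A$ is closed and discrete in $Q$'', since the definition of extent refers to \emph{closed} discrete subsets rather than merely discrete ones.
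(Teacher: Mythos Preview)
Your proposal is correct and follows essentially the same route as the paper: use the extent bound to find a point $q$ in the closure of $S\setminus\{q\}$, take a compact neighborhood of $q$, extract an infinite subset of $S$ inside it, and apply KSB. The only cosmetic difference is that the paper bounds the closure of the infinite subset while you bound the compact neighborhood itself; both are immediate from KSB.
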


\begin{proof}
Let $S$ be a subset of $Q$ of size $\kappa^+$.  We show $S$ has an infinite subset with an upper bound.  
As $e(Q) \le \kappa$,  $S$ is not closed and discrete, so there is a $q$ in $Q$ such that $q$ is in the closure of $S\setminus\{q\}$.  
Since $Q$ is locally compact there is a compact neighborhood $C$ of $q$. Then $C$ contains an infinite subset $S_0$ of $S$. Then $\cl{S_0}$ is compact, so by KSB, $S_0$ has an upper bound.  
\end{proof}

A space $X$ is said to have a $P$-ordered compact cover, where $P$ is a directed set, if it has a compact cover $\{K_p : p \in P\}$ where $p \le p'$ implies $K_p \subseteq K_{p'}$. Clearly if $P \tq \K(X)$ then $X$ has a $P$-ordered compact cover. 
Let us say that a space $X$ has \emph{relative calibre $(\mu,\omega)$} (in $\K(X)$) if every subset $S$ of $X$ of size $\mu$ has an infinite subset $S_0$ with compact closure. 
Clearly, if $\K(X)$ has calibre $(\mu,\omega)$ then $X$ has relative calibre $(\mu,\omega)$. 
But also easy to check:
\begin{lemma}
If a space $X$ has a $P$-ordered compact cover, where $P$ has calibre $(\mu,\omega)$ then $X$ is relative calibre $(\mu,\omega)$.
\end{lemma}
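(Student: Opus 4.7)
The plan is to reduce the relative calibre condition on $X$ directly to the calibre condition on $P$ by pulling back a $\mu$-sized subset of $X$ along the given compact cover.

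First I would fix a $P$-ordered compact cover $\{K_p : p \in P\}$ of $X$ and let $S \subseteq X$ have size $\mu$. For each $x \in S$, since $\{K_p\}$ covers $X$, I can select a $p_x \in P$ with $x \in K_{p_x}$. This gives a map $x \mapsto p_x$ from $S$ into $P$, and the whole argument rests on understanding the fibers of this map.

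Next I would split into two cases based on those fibers. If some single $p \in P$ has an infinite fiber $\{x \in S : p_x = p\}$, then this infinite set sits inside the single compact set $K_p$, so its closure is compact, giving the required $S_0$ immediately. Otherwise every fiber is finite, and since $\mu$ is regular (as required in the definition of calibre), the image $T = \{p_x : x \in S\}$ must have size $\mu$. Applying calibre $(\mu,\omega)$ to $T \subseteq P$ produces an infinite subset $T_0 \subseteq T$ with an upper bound $q \in P$. By the monotonicity of the cover, $K_p \subseteq K_q$ for every $p \in T_0$, so any choice of pre-images $x_p \in S$ (one per $p \in T_0$) lies in $K_q$. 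The choices are automatically distinct because $p_{x_p} = p$ recovers $p$ from $x_p$, so I obtain an infinite $S_0 \subseteq S$ with $\overline{S_0} \subseteq K_q$ compact.

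There is no real obstacle; the only point requiring a little care is the pigeonhole step, where one needs $\mu$ regular to conclude that either a fiber is infinite or $|T| = \mu$ (in the degenerate case $\mu = \omega$, calibre $(\omega,\omega)$ is just countable directedness and the argument collapses to a single application of directedness to the $p_x$'s).
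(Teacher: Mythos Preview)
Your argument is correct and is precisely the natural verification the paper has in mind; the paper itself omits the proof entirely, merely flagging the lemma as ``easy to check.'' Your case split on the fibers of $x \mapsto p_x$ and the use of regularity of $\mu$ to force $|T|=\mu$ in the second case are exactly what is needed.
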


Define the \emph{extent} of a space $Y$, to be $e(Y)=\sup \{|E| : E $ is closed and discete in $Y\}$. Observe $Y$ has countable extent if and only if $e(Y) \le \aleph_0$.  
\begin{lemma}
If $X$ has relative calibre $(\kappa^+,\omega)$ then $e(X) \le \kappa$.
\end{lemma}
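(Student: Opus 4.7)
The plan is to prove the contrapositive: assuming $e(X) \ge \kappa^+$, I will exhibit a subset of $X$ of size $\kappa^+$ that violates the definition of relative calibre $(\kappa^+, \omega)$, namely, a set no infinite subset of which has compact closure.

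First, using $e(X) \ge \kappa^+$, select a closed discrete subset $E$ of $X$ with $|E| = \kappa^+$ (pass to a subset of size exactly $\kappa^+$ if necessary, which is still closed and discrete). This $E$ will be the witness.

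Second, let $S_0$ be any infinite subset of $E$. I claim $\overline{S_0}$ is not compact. Since $E$ is closed in $X$, we have $\overline{S_0} \subseteq E$. But $E$, as a subspace of $X$, carries the discrete topology, so $\overline{S_0}$ is a discrete subspace of $X$. As $\overline{S_0}$ contains the infinite set $S_0$, it is an infinite discrete space, and therefore not compact. Hence $E$ has no infinite subset with compact closure, contradicting relative calibre $(\kappa^+,\omega)$.

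There is no real obstacle here: the argument is essentially immediate from the definitions once one observes that in a closed discrete subspace the closure operation cannot create any new accumulation points, so an infinite subset of $E$ stays infinite and discrete in its closure. The only minor point to watch is making sure the closure is taken in $X$ (not in $E$) when appealing to compactness, but this is handled by the inclusion $\overline{S_0} \subseteq \overline{E} = E$.
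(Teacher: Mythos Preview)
Your proof is correct and takes essentially the same approach as the paper: both arguments hinge on the observation that an infinite subset of a closed discrete set cannot have compact closure. The paper phrases it directly (take any $S$ of size $>\kappa$, apply relative calibre to get an infinite $S_0$ with compact closure, conclude $S$ is not closed discrete), while you phrase it as the contrapositive (take a closed discrete $E$ of size $\kappa^+$, observe no infinite subset has compact closure), but the content is identical.
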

\begin{proof}
To show that $e(X) \le \kappa$ we need to show that no subset of $X$ of size $>\kappa$ is closed discrete. 
Well take, any $S$ a subset of $X$ of size $>\kappa$. Then by relative calibre $(\kappa^+,\omega)$, there is an infinite subset $S_0$ of $S$ with compact closure. Then $S_0$, and $S$, can not be closed and discrete.
\end{proof}

\begin{lemma}
For any space $X$ we have $\K(X) =_T \K(\K(X))$.
\end{lemma}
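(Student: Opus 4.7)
The plan is to produce order-preserving maps with cofinal image in both directions; each such map is automatically a cofinal map and hence witnesses a Tukey quotient.

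For $\K(X) \ge_T \K(\K(X))$, I would define $\phi \colon \K(X) \to \K(\K(X))$ by $\phi(K) = \K(K)$, the hyperspace of $K$ viewed as a subspace of $\K(X)$. The fact needed is that $\K(K)$ is compact (in its Vietoris topology, which agrees with the subspace topology inherited from $\K(X)$) whenever $K$ is compact, so that $\phi$ really lands in $\K(\K(X))$. Monotonicity is immediate: $K \subseteq K'$ implies $\K(K) \subseteq \K(K')$. For cofinality of the image, given any $\mathcal{C} \in \K(\K(X))$, set $K = \bigcup \mathcal{C}$; a standard Vietoris computation shows that the union of a compact family of compacta is again compact in $X$. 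Then every member of $\mathcal{C}$ sits inside $K$, so $\mathcal{C} \subseteq \K(K) = \phi(K)$.

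For the reverse direction, $\K(\K(X)) \ge_T \K(X)$, I would take $\psi \colon \K(\K(X)) \to \K(X)$ defined by $\psi(\mathcal{C}) = \bigcup \mathcal{C}$; this is well-defined for the same compactness-of-union reason. It is clearly order-preserving, and its image already contains every $K \in \K(X)$ because $\psi(\{K\}) = K$ and the singleton $\{K\}$ is a (trivially compact) element of $\K(\K(X))$, so the image is actually all of $\K(X)$.

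The only substantive input is the standard fact that the Vietoris hyperspace of a compact space is compact, equivalently that the union operator $\bigcup \colon \K(\K(X)) \to \K(X)$ is well-defined. Beyond that the argument is a brief formal exercise, and I expect no real obstacle. One could in fact package both directions into the single observation that $\phi$ and $\psi$ form a Galois-type pair: $\psi \circ \phi = \mathrm{id}_{\K(X)}$ and $\phi \circ \psi \ge \mathrm{id}_{\K(\K(X))}$ in the inclusion order on $\K(\K(X))$.
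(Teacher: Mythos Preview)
Your proof is correct and essentially matches the paper's: both use $\phi(K)=\K(K)=\down{K}$ (order-preserving, cofinal via $\bigcup\mathcal{C}$ compact) to get $\K(X)\ge_T\K(\K(X))$, invoking compactness of the Vietoris hyperspace of a compactum (what the paper phrases as ``$\K(\K(X))$ is DK''). For the easy direction the paper instead cites that $\K(X)$ sits as a closed subspace of $\K(\K(X))$, while you give the explicit union map $\psi$; these are equivalent trivialities, and your Galois-pair remark $\psi\circ\phi=\mathrm{id}$, $\phi\circ\psi\ge\mathrm{id}$ is a clean way to package both directions at once.
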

\begin{proof}
Since $\K(X)$ embeds as a closed subspace of $\K(\K(X))$, certainly $\K(X) \le_T \K(\K(X))$. 
For the converse, define $\phi : \K(X) \to \K(\K(X))$ by $\phi(K)=\down{K} = \{ L \in \K(X) : L \subseteq K\}$. Recalling that $\K(\K(X))$ is DK, we see $\phi$ does map into $\K(\K(X))$.  
Clearly $\phi$ is order-preserving.
For any $\K$ in $\K(\K(X))$, we know $\bigcup \K$ is a compact subset of $X$, then $\phi(\bigcup \K) \supseteq \K$, and  so $\phi$ has cofinal image.
\end{proof}

\begin{lemma}\label{l:ext} Let $X$ be locally compact. 
Then $\K(X)$ has calibre $(\kappa^+,\omega)$ if and only if $e(\K(X)) \le \kappa$. 
\end{lemma}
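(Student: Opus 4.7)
The plan is to prove the two directions separately, with the reverse direction reducing essentially immediately to Lemma~\ref{l:ce_to_om1om} and the forward direction being a direct argument using the DK property of $\K(X)$.

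For the reverse direction, assume $e(\K(X)) \le \kappa$. I want to apply Lemma~\ref{l:ce_to_om1om} to the topological directed set $Q = \K(X)$ with the Vietoris topology. Three hypotheses must be checked: local compactness, bounded extent, and KSB. Extent is the hypothesis. Local compactness of $\K(X)$ follows from local compactness of $X$: for any $K \in \K(X)$ take a compact neighborhood $C$ of $K$ in $X$ (obtained by choosing, for each $x \in K$, a compact neighborhood and shrinking to a finite subcover); then $\K(C)$ is a compact Vietoris-neighborhood of $K$ in $\K(X)$. For KSB, recall the standard fact that if $\mathcal{C} \subseteq \K(X)$ is compact in the Vietoris topology then $\bigcup \mathcal{C}$ is compact in $X$, so $\bigcup \mathcal{C}$ is an upper bound of $\mathcal{C}$ in $\K(X)$. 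Hence Lemma~\ref{l:ce_to_om1om} applies and $\K(X)$ has calibre $(\kappa^+,\omega)$.

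For the forward direction, assume $\K(X)$ has calibre $(\kappa^+,\omega)$, and suppose for contradiction that $e(\K(X)) > \kappa$. Then there is a closed discrete subset $S \subseteq \K(X)$ of size $\kappa^+$. By the calibre assumption there is an infinite $S_0 \subseteq S$ which is bounded, say $S_0 \subseteq \down{K}$ for some $K \in \K(X)$. Since $\K(X)$ is DK, $\down{K}$ is a compact subspace of $\K(X)$. Then the infinite set $S_0$ has an accumulation point $L \in \down{K} \subseteq \K(X)$. Now $L$ lies in the closure of $S_0 \subseteq S$, and $S$ is closed, so $L \in S$; but $L$ is an accumulation point of $S_0 \subseteq S$, contradicting that $S$ is discrete.

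Neither direction looks to be a serious obstacle: the real content is packaged in Lemma~\ref{l:ce_to_om1om} and in the KSB/DK/local compactness properties of $\K(X)$ under the Vietoris topology, all of which are standard or already recorded above. The only minor care needed is in verifying KSB and local compactness of $\K(X)$ from local compactness of $X$, and in ensuring the accumulation-point argument in the forward direction correctly uses closedness of $S$ rather than closedness of $S_0$.
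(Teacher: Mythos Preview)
Your proof is correct. The reverse direction matches the paper exactly: both simply invoke Lemma~\ref{l:ce_to_om1om} after checking that $\K(X)$ is locally compact and KSB.

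For the forward direction, however, the paper takes a different route: it uses the equivalence $\K(X) =_T \K(\K(X))$ to transfer calibre $(\kappa^+,\omega)$ up to $\K(\K(X))$, then deduces that $\K(X)$ has \emph{relative} calibre $(\kappa^+,\omega)$ in $\K(\K(X))$, and finally applies the earlier lemma that relative calibre $(\kappa^+,\omega)$ bounds the extent. Your argument is more direct and self-contained: you stay inside $\K(X)$, use DK to trap the infinite bounded set $S_0$ in a compact down-set, and extract an accumulation point to contradict closed discreteness of $S$. Your approach avoids the $\K(\K(X))$ machinery and the chain of auxiliary lemmas entirely; the paper's approach, on the other hand, illustrates how those lemmas package the argument uniformly (and would apply to any DK directed set $Q$ via $\K(Q)$). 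Both are short; yours is arguably the cleaner proof of this particular lemma.
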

\begin{proof}
Suppose, first, that $\K(X)$ has calibre $(\kappa^+,\omega)$. Then $\K(\K(X))$ has calibre $(\kappa^+,\omega)$, and $\K(X)$ has relative calibre $(\kappa^+,\omega)$ in $\K(\K(X))$. Hence $e(\K(X))\le \kappa$. 

Now suppose, $e(\K(X))\le \kappa$. Apply Lemma~\ref{l:ce_to_om1om}. 
\end{proof}

\section{`Rejecting' Directed Sets in the Compact Case}

\subsection{Totally Bounded Uniformities}

\begin{theorem}\label{th:reject1} Let $X$ be  a set. 
 For any totally bounded uniformity, $\mathcal{U}$, on $X$ we have $\mathcal{U} =_T [\kappa]^{< \omega}$ where $\kappa=\cof(\mathcal{U})$. 
 More generally, for any uniformity $\mathcal{U}$ on $X$ and totally bounded subset $S$ we have $\mathcal{U}\restriction S =_T [\kappa]^{< \omega}$ where $\kappa=\cof(\mathcal{U}\restriction S)$.
\end{theorem}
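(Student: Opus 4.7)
The plan is to reduce to the case of a compact space, then prove the compact-space statement via the two Tukey-quotient directions. By Lemma~\ref{l:complete_Tukey}, $\mathcal{U}=_T\widehat{\mathcal{U}}$; since $\mathcal{U}$ is totally bounded, $\widehat{X}$ is a compact Hausdorff space, on which the unique compatible uniformity equals $\mathcal{N}_\Delta^{\widehat{X}^2}$. Thus $\widehat{\mathcal{U}}=\mathcal{N}_\Delta^{\widehat{X}^2}$ and $\kappa = \cof(\mathcal{U}) = \cof(\widehat{\mathcal{U}}) = \chi(\Delta,\widehat{X}^2) = w(\widehat{X})$. The more general variant for $\mathcal{U}\restriction S$ reduces identically, since $\mathcal{U}\restriction S$ is totally bounded with completion on the compact set $\cl{S}\subseteq\widehat{X}$. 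So it suffices to show that for any compact Hausdorff $K$ of weight $\kappa$, $\mathcal{N}_\Delta^{K^2}=_T[\kappa]^{<\omega}$.

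The direction $[\kappa]^{<\omega}\tq\mathcal{N}_\Delta^{K^2}$ is immediate from $\cof(\mathcal{N}_\Delta^{K^2})=\kappa$: enumerate a cofinal family $\{V_\alpha\}_{\alpha<\kappa}$ and take $F\mapsto\bigcap_{\alpha\in F}V_\alpha$, which is order-preserving with cofinal image. For the reverse direction, I would embed $K\hookrightarrow[0,1]^\kappa$ via a separating family $\{f_\alpha:\alpha<\kappa\}$ of continuous $K\to[0,1]$ and consider the sub-basic entourages $T_\alpha^n=\{(x,y)\in K^2:|f_\alpha(x)-f_\alpha(y)|<1/n\}$. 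These generate $\mathcal{N}_\Delta^{K^2}$ under finite intersection; after re-indexing $\kappa\times\omega$ by $\kappa$ as $\{T_\gamma\}_{\gamma<\kappa}$, the map $\psi:[\kappa]^{<\omega}\to\mathcal{N}_\Delta^{K^2}$ defined by $F\mapsto\bigcap_{\gamma\in F}T_\gamma$ is order-preserving with cofinal image. Proving it is a Tukey map reduces to showing that for every infinite $I\subseteq\kappa$, $\bigcap_{\gamma\in I}T_\gamma$ fails to be a neighborhood of $\Delta$.

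The main obstacle is this last verification. For $\kappa=\omega$ the claim is trivial since $\mathcal{N}_\Delta^{K^2}$ has countable cofinality, so it is automatically Tukey equivalent to $[\omega]^{<\omega}=\omega$. For $\kappa\geq\omega_1$, a basic open $U\times U$ around a point $(x,x)\in\Delta$ depends on only finitely many of the $f_\alpha$, so requiring $U\times U\subseteq T_\alpha^n$ for the infinitely many remaining $(\alpha,n)\in I$ would force $\operatorname{diam}(f_\alpha(U))<1/n$ for infinitely many $\alpha$, in effect collapsing the local topology at $x$ to countably many coordinates and contradicting $w(K)=\kappa$. Making this precise requires choosing the embedding $\{f_\alpha\}$ carefully---e.g., as a minimal weight-realizing separating family, so that no infinite subfamily is redundant at any open set. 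This careful-embedding step is the chief technical hurdle.
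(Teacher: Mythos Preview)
Your reduction to the compact case and the easy direction $[\kappa]^{<\omega}\tq\mathcal{N}_\Delta^{K^2}$ match the paper. For the hard direction you also correctly isolate the target: a family $\{T_\gamma:\gamma<\kappa\}\subseteq\mathcal{N}_\Delta^{K^2}$ such that $\bigcap_{\gamma\in I}T_\gamma\notin\mathcal{N}_\Delta^{K^2}$ for every infinite $I$. The gap is that your proposed construction of this family---sub-basic entourages $T_\alpha^n$ coming from an embedding $K\hookrightarrow[0,1]^\kappa$---does not work without the ``careful embedding'' you leave open, and that step is not a mere technicality. Concretely, take $K=2^{\omega_1}$ with the natural coordinates $f_\alpha=\pi_\alpha$. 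Each $\pi_\alpha$ is two-valued, so for the infinite set $I=\{(\alpha_0,n):n\ge 1\}$ one gets $\bigcap_{\gamma\in I}T_\gamma=\{(x,y):x_{\alpha_0}=y_{\alpha_0}\}$, which \emph{is} an open neighborhood of $\Delta$; your $\psi$ is then not a Tukey map. More generally, any locally constant coordinate destroys the argument along the ``vertical'' infinite sets, and in zero-dimensional compacta such coordinates are the natural ones. The vague minimality condition (``no infinite subfamily is redundant at any open set'') would have to rule out both this vertical failure and the horizontal one where infinitely many $f_\alpha$ have small oscillation on a common neighborhood; you give no argument that such a family exists.

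The paper bypasses embeddings entirely. It builds the family $\{U_\alpha:\alpha<\kappa\}$ by transfinite recursion: at stage $\alpha$ one notes that the boxes $U_\beta[x_{\beta,1}]\times U_\beta[x_{\beta,2}]$ for $\beta<\alpha$ cannot cover $X^2\setminus\Delta$ (else $\cof(\mathcal{U})<\kappa$), picks an uncovered off-diagonal pair $(x_{\alpha,1},x_{\alpha,2})$, and separates it by some $U_\alpha$. If some infinite subfamily were bounded by $U_A$, pick $U_\infty$ with $U_\infty^{-1}\circ U_\infty\subseteq U_A$; total boundedness gives a finite $F$ with $X=U_\infty[F]$, so the finitely many boxes $U_\infty[a]\times U_\infty[b]$ cover $X^2$, and pigeonhole puts two pairs $(x_{\alpha,1},x_{\alpha,2})$, $(x_{\beta,1},x_{\beta,2})$ in one box, contradicting the recursive choice. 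Total boundedness is used exactly once, at this pigeonhole step, and no delicate choice of coordinates is needed.
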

\begin{proof}
Evidently, the `more generally' claim follows from the first part, which we now prove. 

For any directed set $P$  we have $[\cof (P)]^{<\omega} \tq P$, so certainly $[\kappa]^{<\omega} \tq \mathcal{U}$. We show $[\kappa]^{<\omega} \le_T \mathcal{U}$. To do this it suffices to show that there is a  $\kappa$-sized subcollection, $\{U_\alpha : \alpha<\kappa\}$, of $\mathcal{U}$ such that no infinite subset has an upper bound, for then $\psi : [\kappa]^{<\omega} \to \mathcal{U}$ defined by $\psi (F) = \bigcap_{\alpha \in F} U_\alpha$ is a Tukey map (carries unbounded sets to unbounded sets).

We may assume $X$ is compact in the topology induced by $\mathcal{U}$, and so $\mathcal{U}=\mathcal{N}_\Delta^{X^2}$.  Indeed we can replace $X$ with the space completion, $\widehat{X}$, which is compact as $\mathcal{U}$ is totally bounded, and $\mathcal{U}$ with its completion $\widehat{\mathcal{U}}$, which is totally bounded. 
To see this, recall, see Lemma~\ref{l:complete_Tukey}, that  $\mathcal{U} =_T \widehat{\mathcal{U}}$, so, in particular, $\kappa=\cof(\mathcal{U}) = \cof(\widehat{\mathcal{U}})$. 

Construct by recursion a family of pairs $(x_{\alpha,1},x_{\alpha,2})$ in $X^2$ and $U_\alpha$ in $\mathcal{U}$, for $\alpha < \kappa$, such that  $(\bigstar)$: $x_{\alpha,i} \ne x_{\beta,j}$ if $i\ne j$ or $\alpha \ne \beta$ and, if $x_{\beta,i} \in U_\alpha[x_{\alpha,i}]$ and $x_{\alpha,i} \in U_\beta[x_{\beta,i}]$ for $i=1,2$ then $\alpha = \beta$.  
To facilitate the construction we additionally ensure that for every $\alpha$ we have $\cl{U_\alpha[x_{\alpha,1}]} \cap \cl{U_\alpha[x_{\alpha,2}]}  = \emptyset$. 

At stage $\alpha$ of the recursive construction first observe that $\{ U_{\beta}[x_{\beta,1}] \times U_{\beta}[x_{\beta,2}] : \beta < \alpha\}$ 
is not a cover of $X^2 \setminus \Delta$. Otherwise every compact (equivalently, closed) subset of $X^2\setminus \Delta$ is contained in some finite union of these open rectangles, and so 
the collection of all complements in $X^2$ of finite unions of closures of rectangles $U_{\beta}[x_{\beta,1}] \times U_{\beta}[x_{\beta,2}]$, where $\beta<\alpha$, would be a base for $\mathcal{N}_{\Delta}^{X^2}=\mathcal{U}$ of size strictly less than $\kappa$, which is the cofinality of $\mathcal{U}$ -- contradiction.
Pick $(x_{\alpha,1},x_{\alpha,2})$ in $X^2\setminus \Delta$ but not in $\bigcup_{\beta<\alpha} (U_{\beta}[x_{\beta,1}] \times U_{\beta}[x_{\beta,2}])$. 
Pick $U_\alpha$ in $\mathcal{U}$ so that $\cl{U_\alpha[x_{\alpha,1}]} \cap \cl{U_\alpha[x_{\alpha,2}]}  = \emptyset$. 
By construction, $x_{\alpha,1} \ne x_{\alpha,2}$, and for no $\beta < \alpha$ do we have $x_{\alpha,i} \in U_\beta[x_{\beta,i}]$ for $i=1,2$. Hence $(\bigstar)$ holds.

To complete the proof it remains to show that $\{U_\alpha : \alpha < \kappa\}$ contains no infinite bounded subcollection. 
For a contradiction, suppose $A$ is an infinite subset of $\kappa$ such that $U_A=\bigcap_{\alpha \in A} U_\alpha$ is in $\mathcal{U}$. Pick $U_\infty$ in the uniformity $\mathcal{U}$ so that $U_\infty^{-1} \circ U_\infty \subseteq U_A$. 
As $\mathcal{U}$ is totally bounded, fix finite $F$ such that $U_\infty[F]=X$. 
Then $\{U_\infty[a] \times U_\infty[b] : a,b \in F\}$ is a finite cover of $X^2$, so there is an infinite subset $A'$ of $A$ and $(x_{\infty,1},x_{\infty,2})$ so that for all $\alpha$ in $A'$ we have $(x_{\alpha,1},x_{\alpha,2}) \in U_\infty[x_{\infty,1}] \times U_\infty[x_{\infty,2}]$.  

Take any two distinct $\alpha, \beta$ from $A'$. 
Then, for $i=1,2$, we have $x_{\alpha,i} \in U_\infty[x_{\infty,i}]$ so 
(a) $(x_{\alpha,i},x_{\infty,i}) \in U_\infty^{-1}$, and $x_{\beta,i} \in U_\infty[x_{\infty,i}]$ so (b) $(x_{\infty,i},x_{\beta,i}) \in U_\infty$. 
From (a) and (b), for $i=1,2$, we have $(x_{\alpha,i},x_{\beta,i}) \in U_{\infty}^{-1} \circ U_\infty \subseteq U_\alpha$, so $x_{\beta,i}$ is in $U_{\alpha}[x_{\alpha,i}]$. 
Interchanging $\alpha$ and $\beta$ in the argument above, for $i=1,2$, along with $x_{\beta,i} \in U_{\alpha}[x_{\alpha,i}]$, we also have  
$x_{\alpha,i} \in U_{\beta}[x_{\beta,i}]$, and this contradicts  $(\bigstar)$, as desired.
\end{proof}

We now re-interpret this theorem, which is a combinatorial statement about the Tukey type of certain  uniformities (namely the totally bounded ones) topologically and in terms of calibres. 
Recalling that $P \not\tq [\mu]^{<\omega}$ if and only if $P$ has calibre $(\mu,\omega)$, and  applying  this to $\mu=\kappa^+$, the following is immediate.

\begin{theorem}\label{th:reject2}\label{cor:unif_cal}
Suppose $\mathcal{U}$ is a compatible uniformity for a space $X$. If $A$ is a totally bounded subset and $\mathcal{U}$ is calibre $(\kappa^+,\omega)$ then $w(A) \le \kappa$. 
\end{theorem}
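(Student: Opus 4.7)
The plan is to apply Theorem~\ref{th:reject1} to the uniformity restricted to $A$, and then convert the resulting cofinality bound into a weight bound by exploiting total boundedness.

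First, I would form $\mathcal{U}\restriction A = \{U \cap A^2 : U \in \mathcal{U}\}$, a compatible uniformity on the subspace $A$, which is totally bounded by the hypothesis on $A$. The restriction map $U \mapsto U \cap A^2$ is order-preserving and surjective, hence cofinal, so $\mathcal{U} \tq \mathcal{U}\restriction A$; since calibres pass to Tukey quotients (Section~\ref{ssec:CCK}), $\mathcal{U}\restriction A$ inherits calibre $(\kappa^+,\omega)$. Theorem~\ref{th:reject1} now yields $\mathcal{U}\restriction A =_T [\lambda]^{<\omega}$ where $\lambda = \cof(\mathcal{U}\restriction A)$. The criterion ``$P$ has calibre $(\mu,\omega)$ iff $P \not\tq [\mu]^{<\omega}$'', applied with $\mu = \kappa^+$, gives $[\lambda]^{<\omega} \not\tq [\kappa^+]^{<\omega}$. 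Since the inclusion $[\kappa^+]^{<\omega} \hookrightarrow [\lambda]^{<\omega}$ is a Tukey map whenever $\lambda \ge \kappa^+$ (infinite subsets of $[\kappa^+]^{<\omega}$ have infinite union, so remain unbounded in $[\lambda]^{<\omega}$), we must have $\lambda \le \kappa$.

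It remains to bound $w(A)$ by $\lambda$. Let $\mathcal{B} \subseteq \mathcal{U}\restriction A$ be cofinal of size at most $\kappa$. By total boundedness, for each $U \in \mathcal{B}$ choose a finite $F_U \subseteq A$ with $\{U[x] : x \in F_U\}$ covering $A$. A standard uniform-space argument (refine a given $U$ to a symmetric $V \in \mathcal{B}$ with $V \circ V \subseteq U$, then locate a point of $F_V$ near the target) shows $\{U[x] : U \in \mathcal{B},\ x \in F_U\}$ is a base for the subspace topology on $A$, and it has cardinality at most $\kappa$. Hence $w(A) \le \kappa$.

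I do not expect a substantial obstacle here: Theorem~\ref{th:reject1} does the heavy lifting, and the only remaining work is the textbook passage from a base of the uniformity to a base of the topology, which succeeds precisely because of the total-boundedness hypothesis on $A$.
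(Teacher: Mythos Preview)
Your proposal is correct and follows exactly the route the paper intends: the paper derives Theorem~\ref{th:reject2} as ``immediate'' from Theorem~\ref{th:reject1} together with the equivalence ``$P$ has calibre $(\mu,\omega)$ iff $P \not\tq [\mu]^{<\omega}$'', and you have simply spelled out that deduction (including the passage from a cofinal base of the restricted uniformity to a topological base on $A$, which the paper leaves tacit).
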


Totally bounded subsets of uniform spaces are also called \emph{precompact} subsets (because they are precisely those subsets whose closure in the completion is compact). Pseudocompact, and,  \textsl{a fortiori}, countably compact and compact subsets are totally bounded with respect to any compatible uniformity. 
\begin{corollary}
Suppose $\mathcal{U}$ is a compatible uniformity for a space $X$. If $A$ is a pseudocompact subspace  and $\mathcal{U}$ is calibre $(\kappa^+,\omega)$ then $w(A) \le \kappa$. 

In particular, if $\mathcal{U}$ is calibre $(\omega_1,\omega)$ then every pseudocompact (and, countably compact or compact)  subspace of $X$ is (compact, second countable and) metrizable.
\end{corollary}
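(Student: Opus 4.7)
My plan is an essentially painless reduction to Theorem~\ref{th:reject2}. That theorem already gives $w(A) \le \kappa$ whenever $A$ is \emph{totally bounded} in $\mathcal{U}$ and $\mathcal{U}$ has calibre $(\kappa^+,\omega)$. So the only thing to supply is the classical fact that pseudocompact subspaces of a uniform space are totally bounded in every compatible uniformity.

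For this fact, note that the restriction $\mathcal{U}\restriction A$ is a compatible uniformity on the pseudocompact space $A$, so it suffices to check that every compatible uniformity on a pseudocompact space is totally bounded. If some symmetric entourage $U$ admitted no finite $U$-net, one could recursively pick $(a_n)_{n<\omega}$ in $A$ with $a_{n+1} \notin U[\{a_0,\ldots,a_n\}]$; halving $U$ to produce entourages $V_{n+1} \circ V_{n+1} \subseteq V_n$ with $V_0 \subseteq U$, the standard construction from uniform space theory yields a continuous pseudometric $d$ on $A$ with $d(a_m,a_n) \ge 1$ for $m \ne n$. Then $f(x) = \sum_n n \cdot \max(0, \tfrac{1}{2} - d(x,a_n))$ is a well-defined continuous real-valued function on $A$ (local finiteness of the supports follows from the triangle inequality) which is unbounded on $\{a_n : n < \omega\}$, contradicting pseudocompactness. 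Countably compact and compact subsets are in particular pseudocompact, so those cases are automatic.

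For the ``in particular'' clause, set $\kappa = \omega$. Then $w(A) \le \omega$, so $A$ is second countable; hence $A$ is Lindel\"of and, being Tychonoff, metrizable by Urysohn's metrization theorem. A Lindel\"of pseudocompact space is compact (Lindel\"of plus countably compact is compact, and pseudocompact plus normal -- in particular Lindel\"of -- is countably compact), so $A$ is compact and metrizable.

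The only non-trivial ingredient in this whole argument is the pseudocompact-to-totally-bounded implication; this is a well-known piece of uniform space theory (see \cite{engelking}) and so can simply be cited rather than reproved in full. Everything else is a direct application of Theorem~\ref{th:reject2} and standard facts about second countable spaces.
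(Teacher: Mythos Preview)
Your proposal is correct and follows exactly the approach the paper takes: the paper states just before this corollary that pseudocompact (and \emph{a fortiori} countably compact and compact) subsets are totally bounded with respect to any compatible uniformity, and the corollary is then immediate from Theorem~\ref{th:reject2}. You simply supply more detail than the paper does, namely a sketch of the pseudocompact-implies-totally-bounded fact and the routine verification of the ``in particular'' clause.
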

\begin{corollary}\label{cor:precompact_in_tg}
Let $G$ be a topological group with identity $e$. 
If $A$ is a precompact subset of $G$, and $\mathcal{N}_e^G$ is calibre $(\kappa^+,\omega)$ then $w(A) \le \kappa$.
\end{corollary}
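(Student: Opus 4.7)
The plan is to reduce this to the uniformity-based result Theorem~\ref{cor:unif_cal} by passing through the left uniformity $\mathcal{U}_L$ of the group, which the excerpt has already identified as the canonical uniform companion of $\mathcal{N}_e^G$.

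First I would invoke the remark made just before Theorem~\ref{represent}: the left uniformity $\mathcal{U}_L$ on $G$, generated by the left translates of neighborhoods of $e$, is a \emph{compatible} uniformity on $G$ and is order isomorphic to $\mathcal{N}_e^G$. Since calibres are preserved by order isomorphism (indeed by Tukey equivalence), $\mathcal{U}_L$ inherits calibre $(\kappa^+,\omega)$ from $\mathcal{N}_e^G$.

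Next I would translate the precompactness hypothesis on $A$ into total boundedness with respect to $\mathcal{U}_L$. A precompact subset of a topological group is, by the standard characterization, one for which every neighborhood $U$ of $e$ admits a finite set $F \subseteq G$ with $A \subseteq FU$; unpacking this in terms of basic entourages $\{(x,y) : x^{-1}y \in U\}$ of $\mathcal{U}_L$ yields exactly that $A$ is totally bounded in $\mathcal{U}_L$ (equivalently, the closure of $A$ in the Raikov completion is compact).

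With these two observations in place, Theorem~\ref{cor:unif_cal} applies directly to the compatible uniformity $\mathcal{U}_L$ and the totally bounded subset $A$, giving $w(A) \le \kappa$. The only subtle point — and the one I would take care to state cleanly rather than assume — is the identification of ``precompact in a topological group'' with ``totally bounded with respect to $\mathcal{U}_L$''; apart from this bookkeeping, the corollary is an immediate specialization of Theorem~\ref{th:reject2}.
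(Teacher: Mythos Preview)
Your proposal is correct and matches the paper's intended (implicit) argument: the paper states this corollary without proof, relying on the earlier remark that $\mathcal{U}_L$ is a compatible uniformity order isomorphic to $\mathcal{N}_e^G$ together with the identification of precompact and totally bounded subsets made just before the corollary, so that Theorem~\ref{cor:unif_cal} applies directly. Your write-up simply makes these steps explicit.
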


Compact spaces have a unique compatible uniformity, which coincides with all neighborhoods of the diagonal, and so with complements of compact subsets of the square disjoint from the diagonal. Consequently we can rephrase again for compact spaces, or subspaces,  without (direct) reference to a uniformity. 
Further recall (Lemma~\ref{l:ext}), if $Y$ is locally compact then $\K(Y)$ has calibre $(\kappa^+,\omega)$ if and only if the extent of $\K(Y)$, $e(\K(Y))$, is no more than  $\kappa$.

\begin{theorem}\label{th:reject3}\label{cor:cpt_wt} Let $X$ be a space.

(1) Suppose $X$ is compact. Then $w(X)=\kappa$ if and only if  $\mathcal{N}_\Delta^{X^2} =_T [\kappa]^{<\omega}$. 

Alternatively phrased, $w(X) \le \kappa$ if and only if $\mathcal{N}_\Delta^{X^2}$ has calibre $(\kappa^+,\omega)$.
And a  second alternative phrasing, $w(X)=e(\K(X^2 \setminus \Delta))$.

(2) More generally, if $A$ is a compact subset of a space $X$, and $\mathcal{N}_\Delta^{X^2}$ is calibre $(\kappa^+,\omega)$ then $w(A) \le \kappa$. 

Both (1) and (2)  hold for any directed set Tukey equivalent to $\mathcal{N}_\Delta^{X^2}$, including $\mathcal{U}_X$, or any other compatible uniformity, and $\K(X^2 \setminus \Delta)$.
\end{theorem}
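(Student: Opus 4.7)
The plan is to reduce everything to Theorem~\ref{th:reject1}. On a compact space $X$ the neighborhood filter $\mathcal{N}_\Delta^{X^2}$ is the unique compatible uniformity $\mathcal{U}_X$, and is automatically totally bounded. So Theorem~\ref{th:reject1} gives $\mathcal{N}_\Delta^{X^2} =_T [\cof(\mathcal{N}_\Delta^{X^2})]^{<\omega}$, and the heart of (1) is then the standard identification $\cof(\mathcal{N}_\Delta^{X^2}) = w(X)$ for compact $X$. I would establish this in the routine way: a cofinal family in $\mathcal{N}_\Delta^{X^2}$ of cardinality $\lambda$ produces a topological base of the same size by choosing, for each entourage $U$, a finite $F$ with $\{U[x] : x \in F\}$ covering $X$ (using total boundedness); conversely a topological base $\mathcal{B}$ of cardinality $\lambda$ produces a cofinal family in $\mathcal{N}_\Delta^{X^2}$ by taking complements of finite unions of rectangles $\cl{B_1} \times \cl{B_2}$ with disjoint closures.

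The two alternative phrasings of (1) are then formal consequences of this Tukey equivalence. For the calibre version, recall from Subsection~\ref{ssec:CCK} that $P$ has calibre $(\mu,\omega)$ iff $P \not\tq [\mu]^{<\omega}$, and observe that $[\lambda]^{<\omega} \tq [\mu]^{<\omega}$ iff $\lambda \ge \mu$; applied with $\mu = \kappa^+$ to the equivalence $\mathcal{N}_\Delta^{X^2} =_T [w(X)]^{<\omega}$ this gives calibre $(\kappa^+,\omega)$ iff $w(X) \le \kappa$. For the extent version, note that the complement map $K \mapsto X^2 \setminus K$ is an order isomorphism $(\K(X^2 \setminus \Delta),\subseteq) \cong (\mathcal{N}_\Delta^{X^2},\supseteq)$, so the two are Tukey equivalent; since $X^2 \setminus \Delta$ is locally compact (being open in compact $X^2$), Lemma~\ref{l:ext} turns calibre $(\kappa^+,\omega)$ into $e(\K(X^2 \setminus \Delta)) \le \kappa$, and combining with the calibre version yields $w(X) = e(\K(X^2 \setminus \Delta))$.

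For part (2) I reduce to (1) by restriction. Define $r : \mathcal{N}_\Delta^{X^2} \to \mathcal{N}_{\Delta_A}^{A^2}$ by $r(U) = U \cap A^2$. This is clearly order-preserving, and cofinal because for any $V \in \mathcal{N}_{\Delta_A}^{A^2}$ the set $A^2 \setminus V$ is compact in $X^2$ (as $A$ is compact) and disjoint from $\Delta_X$, so $X^2 \setminus (A^2 \setminus V)$ lies in $\mathcal{N}_\Delta^{X^2}$ and restricts inside $V$. Hence $\mathcal{N}_\Delta^{X^2} \tq \mathcal{N}_{\Delta_A}^{A^2}$, calibre $(\kappa^+,\omega)$ transfers to the right-hand side, and part (1) applied to the compact space $A$ delivers $w(A) \le \kappa$. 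The closing clause is immediate since both \emph{Tukey equivalence to $[\kappa]^{<\omega}$} and \emph{calibre $(\kappa^+,\omega)$} are Tukey invariants, and the two specific directed sets ($\mathcal{U}_X$ and $\K(X^2 \setminus \Delta)$) have already appeared explicitly in the argument.

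The only place where one needs to be a little careful is the cofinality identification $\cof(\mathcal{N}_\Delta^{X^2}) = w(X)$ on compact $X$, whose two directions call for slightly different uses of compactness; the rest is bookkeeping on top of Theorem~\ref{th:reject1}.
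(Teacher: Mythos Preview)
Your proposal is correct and follows essentially the same route as the paper: the theorem is presented there as an immediate rephrasing of Theorem~\ref{th:reject1} (and its corollary Theorem~\ref{th:reject2}) for the unique, totally bounded uniformity on a compact space, together with Lemma~\ref{l:ext} for the extent statement. The only difference is that you spell out the identification $\cof(\mathcal{N}_\Delta^{X^2})=w(X)$ and the restriction argument for part~(2), both of which the paper leaves implicit.
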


\subsection{Variations and Problems}\label{ssec:VP}

Comparing what we can prove about pseudocompact and countably compact subsets of uniform spaces with calibre $(\omega_1,\omega)$, with what we can show for compact subsets of a space, $X$, with $\mathcal{N}_\Delta^{X^2}$ or $\K(X^2 \setminus \Delta)$ having  calibre $(\omega_1,\omega)$, the following questions naturally arise. 

\begin{ques} Let $X$ be a space. 
Suppose one of the directed sets specified has calibre $(\omega_1,\omega)$: 
 (A) $\mathcal{N}_\Delta^{X^2}$ or (B) $\K(X^2\setminus \Delta)$.

(i) If $X$ is countably compact then is $X$ (compact, second countable and) metrizable?

(ii) If $X$ is  pseudocompact then is $X$ (compact, second countable and) metrizable?

(iii) If $A$ is a countably compact (closed) subset of $X$ then is $A$ (compact, second countable and) metrizable?

(iv) If $A$ is a pseudocompact (closed) subset of $X$ then is $A$ (compact, second countable and) metrizable?
\end{ques}

Next we answer (A)(i) positively. The remaining questions are wide open.

\begin{proposition}
Let $X$ be a space such that $\mathcal{N}_\Delta^{X^2}$ has calibre $(\omega_1,\omega)$. If $X$ is countably compact  then $X$ is compact (and metrizable).
\end{proposition}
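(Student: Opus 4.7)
The plan is to argue by contradiction: assume $X$ is countably compact but not compact. Since a countably compact Lindel\"of space is compact, $X$ must fail to be Lindel\"of, so some open cover $\mathcal{V}$ of $X$ has no countable subcover. A straightforward transfinite recursion of length $\omega_1$ then yields $\{V_\alpha : \alpha<\omega_1\} \subseteq \mathcal{V}$ and points $\{x_\alpha:\alpha<\omega_1\}$ in $X$ with $x_\alpha \in V_\alpha$ and $x_\alpha \notin \bigcup_{\beta<\alpha} V_\beta$ at each stage (possible because $\{V_\beta:\beta<\alpha\}$ is countable, hence not a cover). In particular the $x_\alpha$'s are pairwise distinct, and $x_\beta \notin V_\alpha$ whenever $\alpha<\beta$.

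The key engineering step is to repackage this cover data as an $\omega_1$-sized subfamily of $\mathcal{N}_\Delta^{X^2}$ on which the calibre hypothesis can bite. For each $\alpha$, set
\[
R_\alpha = \{x_\alpha\}\times (X\setminus V_\alpha) \cup (X\setminus V_\alpha)\times\{x_\alpha\}, \qquad W_\alpha = X^2 \setminus R_\alpha.
\]
Since $X$ is Hausdorff, $R_\alpha$ is closed; since $x_\alpha\in V_\alpha$, $R_\alpha$ is disjoint from $\Delta$. Hence $W_\alpha$ is an open neighborhood of the diagonal. Crucially, for $\alpha<\beta$ the condition $x_\beta \notin V_\alpha$ forces $(x_\alpha,x_\beta)\in R_\alpha$, and so $(x_\alpha,x_\beta)\notin W_\alpha$.

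Applying calibre $(\omega_1,\omega)$ to $\{W_\alpha:\alpha<\omega_1\}$ produces an infinite $A\subseteq\omega_1$ and $W \in \mathcal{N}_\Delta^{X^2}$ with $W\subseteq W_\alpha$ for every $\alpha\in A$. By countable compactness, the infinite set $\{x_\alpha:\alpha\in A\}$ has an accumulation point $p\in X$. Since $W$ is an open neighborhood of $(p,p)$, it contains a basic open rectangle $O\times O$ with $p\in O$, and then $O$, being a neighborhood of the accumulation point $p$, meets $\{x_\alpha:\alpha\in A\}$ in an infinite set. Picking any $\alpha<\beta$ in $A$ with $x_\alpha,x_\beta\in O$ yields $(x_\alpha,x_\beta)\in W \subseteq W_\alpha$, contradicting the displayed property of $W_\alpha$. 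Hence $X$ is compact, and Theorem~\ref{th:reject3}(1) then immediately gives $w(X)\le\omega$, so that $X$ is second countable compact Hausdorff and therefore metrizable.

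I expect the only delicate point to be the design of the $W_\alpha$: one needs a neighborhood of $\Delta$ large enough that its defining forbidden set $R_\alpha$ be closed and miss the diagonal (hence each "leg" of $R_\alpha$ is pinned to the single point $x_\alpha$), yet still faithful enough to record the one-sided failure $x_\beta\notin V_\alpha$, so that after passing to a bounded infinite subfamily a genuine collision with countable compactness is produced rather than a vacuous one. The rest of the argument is essentially bookkeeping: the transfinite selection uses only $|\alpha|\le\aleph_0$ for $\alpha<\omega_1$, and the endgame is a single application of openness of $W$ together with the definition of accumulation point.
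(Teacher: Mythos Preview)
Your argument is correct and is essentially the paper's proof. In fact your diagonal neighborhoods coincide with the paper's: the paper sets $N_\alpha = U_\alpha^2 \cup (X\setminus\{y_\alpha\})^2$, whose complement is exactly $\{y_\alpha\}\times(X\setminus U_\alpha)\ \cup\ (X\setminus U_\alpha)\times\{y_\alpha\}$, i.e.\ your $R_\alpha$; the only difference is that the paper finishes by exhibiting $\{y_\alpha:\alpha\in A\}$ as closed discrete via a square-cover refinement of the common bound, whereas you run the contrapositive and use an accumulation point directly.
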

\begin{proof} We show the contra-positive. So suppose $X$ is not compact. 
Then either $X$ is not countably compact, and we are done, or $X$ is not Lindel\"{o}f, so we can find a strictly increasing sequence of open sets, $\{U_\alpha : \alpha < \kappa\}$ covering $X$ with no countable subcover (so $\kappa$ is uncountable).  Fix $y_\alpha \in U_\alpha \setminus \bigcup_{\beta < \alpha} U_\beta$. 
For each $\alpha < \kappa$, 
let $V_\alpha = X \setminus \{y_\alpha \}$ and $N_\alpha = U_\alpha^2 \cup V_\alpha^2$ , which is a neighborhood of the diagonal. As $\mathcal{N}_\Delta^{X^2}$ has calibre $(\omega_1,\omega)$, we
know there is an infinite $A \subseteq \kappa$ and a neighborhood $N$ of the diagonal such that $N_\alpha \supseteq N$
for all $\alpha \in A$. Without loss of generality, we may assume $N = \bigcup_{W \in \mathcal{W}} W^2$ for some open cover $\mathcal{W}$ of $X$.
We will show that each member of $\mathcal{W}$ contains at most one point of the infinite set
$\{y_\alpha : \alpha \in A\}$. 
Indeed, if $y_\alpha , y_\beta \in W$ for some $W$ from $\mathcal{W}$ and distinct $\alpha,\beta \in A$, then 
we see that $(y_\alpha , y_\beta) \in W^2 \subseteq 
N \subseteq N_\alpha = U_\alpha^2 \cup V_\alpha^2$ . Now since $y_\alpha \notin V_\alpha$ , then we must have $y_\beta \in U_\alpha$ , which gives $\beta \le \alpha$.
But symmetrically,  $(y_\alpha , y_\beta ) \in U_\beta^2 \cup V_\beta^2$ , which implies that $\alpha \le \alpha$. Thus $\alpha=\beta$. 
Hence, $\mathcal{W}$
witnesses that $\{y_\alpha :\alpha \in A\}$ is an infinite closed discrete subset of $X$, and thus $X$ is not countably compact. 
\end{proof}

\subsection{$P$-Bases}

Recall from the Introduction that a space $X$ is said to have a $P$-base, where $P$ is a directed set, if  every point $x$ has a neighborhood base, $\{U_p : p \in P\}$, where $U_p \subseteq U_{p'}$ if $p \ge p'$. 
Also recall that a topological space $X$ is scattered if each non-empty subspace of $X$ has an isolated point. It is known that any compact scattered space is zero-dimensional.
Scattered spaces can be stratified by the scattered height, as follows. 
For any subspace $A$ of a space $X$, let $A'$ be the set of all non-isolated
points of $A$. It is straightforward to see that $A'$ is a closed subset of $A$. Let $X^{(0)} = X$ and define $X^{(\alpha)} = \bigcap_{\beta<\alpha} (X^{(\beta)} ) '$ for each $\alpha > 0$. Then a space $X$ is  scattered if $X ^{(\alpha)} = \emptyset$ for some ordinal $\alpha$. If $X$ is scattered then for each of its points, $x$, there exists a unique ordinal $h(x)$ such that $x\in X^{(h(x))}\setminus X^{(h(x)+1)}$. The ordinal $h(X)=\sup\{h(x): x\in X\}$  is called the scattered
height of $X$ and is denoted by $h(X)$.  Also, it  straightforward to show that for any compact scattered space $X$, $X^{(h(X))}$ is a non-empty finite subset.

 The ordinal space $\omega_1+1$ is compact, scattered, of scattered height $\omega_1$, with a  $P$-base for $P=\K(\mathbb{Q})$, (and consistently, $P=\omega^\omega$), and so for $P$ with calibre $(\omega_1,\omega)$. 
 But it is an interesting open question, raised in \cite[Question~ 3.3]{Feng_Pbase} , whether every compact scattered space, with countable scattered height and a $P$-base where $P$ is calibre $(\omega_1,\omega)$, is countable. 
 We show the answer is positive if the space is additionally hereditarily meta-Lindel\"{o}f (every open cover of any subspace has a point-countable open refinement).

\begin{theorem}\label{th:hM}  Let $X$ be a hereditarily meta-Lindel\"{o}f, compact, scattered space  with countable scattered height. 
If $X$ has a $P$-base with $P$ having calibre $(\omega_1, \omega)$, then $X$ is countable, hence metrizable.
\end{theorem}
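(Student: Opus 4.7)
The plan is transfinite induction on the scattered height $\eta = h(X) < \omega_1$. The base case $\eta = 0$ is immediate, since $X$ is then discrete and compact, hence finite. Throughout, write $L_\alpha = X^{(\alpha)} \setminus X^{(\alpha+1)}$. Note that each closed subspace of $X$ inherits all the hypotheses (compact, scattered, hereditarily meta-Lindelöf, and a $P$-base with $P$ of calibre $(\omega_1,\omega)$, the latter two being hereditary).

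The key ingredient is what I will call the \emph{isolation claim}: for any $z \in X$ of successor height $\beta = \alpha + 1$, and any clopen $V \subseteq X$ with $V \cap X^{(\beta)} = \{z\}$ (such $V$ exist by zero-dimensionality of compact scattered Hausdorff spaces), $|L_\alpha \cap V| \leq \omega$. Indeed, since $V$ is open, $V^{(\alpha)} = V \cap X^{(\alpha)} = \{z\} \cup (L_\alpha \cap V)$ with $z$ as the unique non-isolated point; so $V^{(\alpha)}$ is the Alexandroff one-point compactification of the discrete space $L_\alpha \cap V$, and $\mathcal{N}_z^{V^{(\alpha)}}$ is order-isomorphic to $[L_\alpha \cap V]^{<\omega}$. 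The restriction $U \mapsto U \cap V^{(\alpha)}$ is an order-preserving cofinal map, giving $\mathcal{N}_z^X \tq \mathcal{N}_z^{V^{(\alpha)}}$; calibre $(\omega_1,\omega)$ transfers through the $P$-base and this quotient, and since $[\kappa]^{<\omega}$ has calibre $(\omega_1,\omega)$ if and only if $\kappa \leq \omega$, the claim follows.

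For the inductive step ($\eta > 0$), points $z$ of level $\beta < \eta$ admit a clopen isolating nbhd $V_z$ that is a compact scattered subspace of $X$ of strictly smaller height, inheriting all hypotheses; so $V_z$ is countable by the inductive hypothesis. Since $X^{(\eta)}$ is finite, partition $X$ into clopen pieces each containing a single top-level point; this reduces to showing: every compact scattered $Y$ of height $\eta$ with $|Y^{(\eta)}| = 1$, hereditarily meta-Lindelöf, having a $P$-base with $P$ calibre $(\omega_1,\omega)$, is countable. Treat this by a secondary induction on $\eta$. For $\eta = \gamma+1$ successor: the isolation claim at the unique top $z$ bounds $|L_\gamma^Y| \leq \omega$; each $y \in L_\gamma^Y$ admits an isolating clopen $W_y \subseteq Y$ with $z \notin W_y$, of height $\gamma < \eta$, countable by the primary IH; then the leftover $R = Y \setminus \bigcup_y W_y$ (closed, containing $z$, disjoint from $L_\gamma^Y$) must itself be shown countable. (The limit case of $\eta$ is handled by a similar but more delicate use of the isolation claim applied to ordinals $\alpha$ cofinal in $\eta$.)

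The main obstacle is the countability of the leftover $R$, and this is exactly where hereditary meta-Lindelöfness is essential. The open cover of $R \setminus \{z\}$ by clopen isolating nbhds of its own points (each of height strictly below $\gamma$, hence countable by the primary IH) admits, by hereditary meta-Lindelöfness, a point-countable open refinement whose members are all countable. Combining this local structure with the calibre $(\omega_1,\omega)$ of $\mathcal{N}_z^X$---which forbids $z$ from being a complete accumulation point of any uncountable subset of $R$, by an isolation-claim-type argument at $z$ within $R$---and the compactness of $R$, one extracts countability of $R$. This synthesis of point-countable refinements with the $P$-base calibre at $z$ is the technical heart of the proof; without the hereditary meta-Lindelöf hypothesis one cannot control how lower-level points of $R$ accumulate at the unique top, which is precisely the role of the ``mild additional condition''.
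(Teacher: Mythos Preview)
Your isolation claim is correct, and in fact your successor case is \emph{already complete} once you observe it: the leftover $R = Y \setminus \bigcup_{y \in L_\gamma^Y} W_y$ is a closed subspace with $R \cap Y^{(\gamma)} = \{z\}$, so $R^{(\gamma)} \subseteq \{z\}$ and hence $h(R) \le \gamma < \eta$. The primary inductive hypothesis applies directly to $R$, and meta-Lindel\"{o}fness plays no special role here. You have misidentified the location of the difficulty.

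The genuine gap is the limit case, which you wave away in a parenthetical. The isolation claim is a statement about points of \emph{successor} height and controls only the single level immediately below; it says nothing about the unique top point $z$ when $\eta$ is a limit, and applying it to points at ordinals cofinal in $\eta$ yields only scattered local information that does not assemble into a bound on $|Y|$. Your further assertion that calibre $(\omega_1,\omega)$ of $\mathcal{N}_z$ ``forbids $z$ from being a complete accumulation point of any uncountable subset'' is unjustified: what calibre gives (via the map $s \mapsto Y \setminus \{s\}$) is merely that any uncountable $S$ has an \emph{infinite} subset bounded away from $z$, which is far weaker and does not prevent $z$ from being a complete accumulation point of $S$.

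The paper's argument for the limit case is quite different from anything you sketch. Hereditary meta-Lindel\"{o}fness is used at the outset to fix a single \emph{point-countable} family $\{U_x : x \in X\}$ of clopen sets with $U_x \cap X^{(h(x))} = \{x\}$. Assuming $Y$ uncountable (with $Y^{(\eta)}=\{z\}$, $\eta$ limit), each $U_x$ for $x\ne z$ has height $<\eta$ and is hence countable by the inductive hypothesis; using point-countability plus countability of the $U_x$, one recursively selects $\omega_1$-many isolated points $\{y_\lambda\}$ so that no $U_x$ contains two of them. Now the $P$-base at $z$ gives compact sets $K_p = Y \setminus B_p$ (each of height $<\eta$, hence countable), and calibre $(\omega_1,\omega)$ places infinitely many $y_{\lambda_n}$ inside a single compact $K_{p_\ast}$; but $K_{p_\ast}$ is covered by finitely many $U_{x_i}$, each containing at most one $y_{\lambda_n}$ --- contradiction. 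This separation step via the point-countable family, not your isolation claim, is where meta-Lindel\"{o}fness does its work.
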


This follows immediately from the next technical result.
\begin{theorem} Let $X$ be a compact scattered space with countable scattered height. Suppose that, to each $x$ in $X$, we can assign a clopen neighborhood $U_x$ such that $U_x\cap X^{(h(x))}=\{x\}$ and $\{U_x: x\in X\}$ is point-countable. 
If $X$ has a $P$-base with $P$ having calibre $(\omega_1, \omega)$, then $X$ is countable, hence metrizable. \end{theorem}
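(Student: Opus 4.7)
The plan is transfinite induction on $\beta = h(X) < \omega_1$; the base $\beta = 0$ is immediate ($X$ is compact discrete, hence finite). The key reusable tool is this: for any $z \in X$ with $h(z) < \beta$, the clopen set $U_z$ is itself a compact scattered space of scattered height exactly $h(z)$ (using $(U_z)^{(\delta)} = U_z \cap X^{(\delta)}$ and $U_z \cap X^{(h(z))} = \{z\}$), inheriting the $P$-base, the point-countable clopen assignment, and the calibre hypothesis, so by the inductive hypothesis $U_z$ must be countable.

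Assume $X$ is uncountable. First, reduce to the case where $X^{(1)}$ is countable: if $X^{(1)}$ is uncountable, descend along $X \supseteq X^{(1)} \supseteq X^{(2)} \supseteq \ldots$ until reaching the least $\alpha^* \le \beta$ with $X^{(\alpha^*)}$ countable (which exists since $X^{(\beta)}$ is finite), and either apply the inductive hypothesis to $X^{(\alpha^*-1)}$ when $\alpha^*$ is a successor, or use the countability of $X^{(\alpha^*)}$ combined with the same subsequent analysis when $\alpha^*$ is a limit. So I may assume $X^{(1)}$ is countable. Pick $\omega_1$-many isolated points $\{y_\xi : \xi < \omega_1\} \subseteq X \setminus X^{(1)}$. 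A complete accumulation point of $\{y_\xi\}$ exists in $X^{(1)}$, and by the key tool no such point can have scattered height $< \beta$; so every complete accumulation point lies in the finite set $X^{(\beta)}$. Fix one such $y^*$. Applying the key tool inside $U_{y^*}$ to clopens $U_z \cap U_{y^*}$ of height $h(z) < \beta$ shows that $y^*$ is the \emph{unique} complete accumulation point of $\{y_\xi\} \cap U_{y^*}$ in $U_{y^*}$, since any other candidate would either contradict induction or lie in $U_{y^*} \cap X^{(\beta)} = \{y^*\}$.

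Now I prune. Since $X^{(1)}$ is countable, so is $(U_{y^*} \cap X^{(1)}) \setminus \{y^*\}$, and for each $z$ there, not being a complete accumulation point, I pick a neighborhood $W_z \subseteq U_{y^*}$ of $z$ meeting $\{y_\xi\}$ in at most countably many points. Removing the countable union $\bigcup_z (W_z \cap \{y_\xi\})$ from $\{y_\xi\} \cap U_{y^*}$ leaves an uncountable $S$ whose only accumulation point in $X$ is $y^*$: accumulation points of $S$ lie in $U_{y^*} \cap X^{(1)}$ (by clopen-ness of $U_{y^*}$ and isolated-ness of the $y_\xi$), and the pruning removes every alternative. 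Finally, using the $P$-base $\{V_p : p \in P\}$ at $y^*$, for each $y_\xi \in S$ pick $p_\xi \in P$ with $y_\xi \notin V_{p_\xi}$; the calibre $(\omega_1, \omega)$ hypothesis supplies an infinite $B$ with a common upper bound $p \in P$, so $V_p$ misses $\{y_\xi : \xi \in B\}$. Yet this infinite subset of $S$ in compact $X$ must accumulate, and by uniqueness only at $y^*$, so $V_p$ must meet it -- the desired contradiction.

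The main obstacle is producing the uncountable $S$ with $y^*$ as its \emph{unique} accumulation point: the calibre-provided $V_p$ excludes only a single infinite subset, and without uniqueness such an exclusion is perfectly consistent with $V_p$ being a neighborhood of some other CAP of $\{y_\xi\}$. Both parts of the hypothesis enter essentially here -- the clopen condition $U_x \cap X^{(h(x))} = \{x\}$ collapses $(U_{y^*})^{(\beta)}$ to the single point $y^*$ so that the inductive hypothesis forces uniqueness of the complete accumulation point inside $U_{y^*}$, while the countability of $X^{(1)}$ (the Case B reduction) makes the pruning a countable operation.
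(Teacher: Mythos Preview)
Your argument has a genuine gap in the reduction step, and the gap is not cosmetic.

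First, the sentence ``apply the inductive hypothesis to $X^{(\alpha^*-1)}$'' cannot be right: for a limit height $\beta$ (say $\beta=\omega$) and any $\gamma\ge 1$, the space $X^{(\gamma)}$ still has scattered height $\beta$, so the inductive hypothesis does not apply to it. Presumably you meant ``run the subsequent analysis on $Y=X^{(\alpha^*-1)}$ in place of $X$''; with that correction the successor case of $\alpha^*$ is salvageable. But the limit case of $\alpha^*$ is simply not addressed. Saying ``use the countability of $X^{(\alpha^*)}$ combined with the same subsequent analysis'' does not specify \emph{which} uncountable subspace $Y$ with $Y^{(1)}$ countable you intend to produce; no such $Y$ is available as a Cantor--Bendixson level, and you give no alternative construction.

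Second --- and this is the structural signal that the gap is real --- your argument at level $\beta$ never uses the point-countability of $\{U_x:x\in X\}$. You only invoke it when checking that subspaces inherit the hypotheses. Since your base case $\beta=0$ is trivial, if your inductive step were correct it would prove the theorem \emph{without} the point-countability assumption, i.e.\ it would settle the open problem from \cite{Feng_Pbase} that the paper explicitly flags as unresolved. The place where the paper's proof genuinely uses point-countability is exactly where you try to substitute the (unproven) countability of $X^{(1)}$: the paper builds the $\omega_1$-many isolated points $\{y_\lambda\}$ so that each $U_x$ contains at most one of them (this is where point-countability and the sets $D_y=\bigcup\{U_x: y\in U_x\}$ enter), and then a finite cover of a compact $K_{p_*}$ by $U_{x_i}$'s gives the contradiction. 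Your pruning-to-a-unique-accumulation-point strategy is an appealing alternative in spirit, but it needs $U_{y^*}\cap X^{(1)}$ to be countable, and you have not established that from the actual hypotheses.
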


\begin{proof} The proof is by induction on the scattered height. In \cite{DowFeng} it is shown that the result holds when the scattered height is finite. 
Assume, then, that the scattered height of $X$ is $\alpha$ with $\alpha<\omega_1$ and the result holds for any compact space with scattered height $<\alpha$. We show that $X$ is countable. 

Suppose, first, $\alpha$ is a successor , $\alpha=\alpha^-+1$. Let $Y=X^{(\geq \alpha^-)}$. Clearly $Y$ is a compact subspace of $X$ with scattered height $2$, hence it is countable. Then it is straightforward to verify that $X$ is countable. 

Now suppose that $\alpha$ is a limit. Without loss of generality, we assume that $X^{(\alpha)}$ is a singleton, denoted by $x_\ast$. As $X$ has a $P$-base we can fix a clopen base $\{B_p: p\in P\}$ at $x_\ast$, where $B_p \subseteq B_{p'}$ if $p \ge p'$. For each $p\in P$, $K_p=X\setminus B_p $ is compact and with scattered height $\beta_p$ which is clearly $<\alpha$, hence it is countable and also $K_p^{(\beta_p)}$ is finite. Note that $\{K_p: p\in P\}$ is a $P$-ordered compact cover of $X\setminus \{x_\ast\}$.  

Assume, for a contradiction, that $X$ is uncountable. Let $\{U_x: x\in X\}$ be the collection of clopen sets satisfying the conditions in the statement of this theorem. Clearly for each $x\in X\setminus \{x_\ast\}$, $U_x$ is compact with scattered height $<\alpha$,  hence is countable. Also, without loss of generality, we assume that $X\setminus X^{(1)}$ is uncountable. For each $y\in X\setminus X^{(1)}$, let $C_y=\{x\in X\setminus \{x_\ast\}: y\in U_x\}$ which is clearly countable and $D_y=\bigcup\{U_x: x\in C_y\}$ which is countable too since each $U_x$ for $x\in C_y$ is countable. By a transfinite induction, we pick a set $\{y_\lambda: \lambda<\omega_1\}\subseteq X\setminus X^{(1)}$ such that $y_\lambda \notin \bigcup\{D_{y_\gamma}: \gamma <\lambda\}$. Such a set exists because $X\setminus X^{(1)}$ is uncountable. Also, it is clear that the cardinality of $U_x\cap \{y_\lambda: \lambda<\omega_1\}$ is at most $1$.  For each $\lambda<\omega_1$, pick $p_\lambda\in P$ such that $y_\lambda\in K_{p_\lambda}$. Since $P$ has calibre $(\omega_1, \omega)$, there is a countable subset $\{\lambda_n:n\in \omega\}$ in $\omega_1$ such that $\{p_{\lambda_n}: n\in \omega\}$ is bounded above. We denote $p_\ast$ to be an upper bound of $\{p_{\lambda_n}: n\in \omega\}$ in $P$. Then $\{y_{\lambda_n}: n\in \omega\}\subseteq K_{p_\ast}$. Since $K_{p_\ast}$ is compact, there is a finite subset $\{x_i: i\leq m\}$ of $K_{p_\ast}$ such that $K_{p_\ast}\subseteq \bigcup \{U_{x_i}: i\leq m\}$. Then, $\{y_{\lambda_n}: n\in \omega\} \subseteq  \{U_{x_i}: i\leq m\}$ which is a contradiction because each $y_{\lambda_n}$ is at most in one of the $\{U_{x_i}: i\leq m\}$. This finishes the proof. \end{proof}

\section{Examples and Applications}

\subsection{Strong Diagonals}

We present as an application of Theorem~\ref{cor:cpt_wt} the following partial solution to \cite[Problem~4.1]{Sanchez}. Partial because Sanchez asked about compact spaces, $X$, with an `$M$-diagonal' which means that $X^2 \setminus \Delta$ has a $\K(M)$-ordered compact cover, and we require a `strong $M$-diagonal'. A space $X$ has a  \emph{strong $M$-diagonal} if $\K(M) \tq \K(X^2 \setminus \Delta)$.
\begin{theorem} Suppose $M$ is a metric space and $K$ is a compact
space with a strong $M$-diagonal. Then $w(K ) \le w(M)$.
\end{theorem}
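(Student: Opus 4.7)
The plan is to reduce the statement to a calibre computation on $\K(M)$ and then invoke the theory developed earlier. Let $\kappa = w(M)$. By hypothesis $\K(M) \tq \K(K^2 \setminus \Delta)$, and since calibres are preserved under Tukey quotients, showing $\K(M)$ has calibre $(\kappa^+,\omega)$ will transfer this calibre to $\K(K^2 \setminus \Delta)$. Theorem~\ref{cor:cpt_wt}(1) then yields $w(K) \le \kappa = w(M)$, as desired.

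The central step is therefore to establish: \emph{for a metric space $M$ of weight $\kappa$, $\K(M)$ has calibre $(\kappa^+, \omega)$.} I would equip $\K(M)$ with the Hausdorff metric (which induces the Vietoris topology for metric $M$). Approximating compact subsets of $M$ by finite subsets of a dense set $D \subseteq M$ of size $\kappa$ shows $[D]^{<\omega}$ is dense in $\K(M)$, so $\K(M)$ is metrizable of weight at most $\kappa$, and hence of extent at most $\kappa$ (since extent $\le$ weight in any metric space). Consequently any $\kappa^+$-sized subfamily $\{K_\alpha : \alpha < \kappa^+\}$ of $\K(M)$ is not closed discrete, so it admits an accumulation point $K_\infty \in \K(M)$ and thereby contains a Hausdorff-convergent sequence $K_{\alpha_n} \to K_\infty$. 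The candidate upper bound is the closure in $M$ of $C = K_\infty \cup \bigcup_n K_{\alpha_n}$.

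The main obstacle I anticipate is verifying that $\overline{C}$ is compact, since $M$ need not be locally compact or complete and Lemma~\ref{l:ext} does not apply. I would argue by sequential compactness in $M$. Given any sequence $(x_k)$ in $C$, either infinitely many terms lie in some finite union $K_\infty \cup \bigcup_{n \le N} K_{\alpha_n}$, a compact subset of $M$ from which a convergent subsequence is immediate; or after thinning $x_k \in K_{\alpha_{n_k}}$ with $n_k \to \infty$, forcing $d(x_k, K_\infty) \to 0$, and then picking $y_k \in K_\infty$ nearest to $x_k$ and using compactness of $K_\infty$ yields a subsequence of $(y_k)$, and hence of $(x_k)$, convergent in $M$. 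In either case $(x_k)$ has a convergent subsequence, so $\overline{C}$ is sequentially compact and, being metric, compact. Thus $\overline{C} \in \K(M)$ is an upper bound for the infinite subfamily $\{K_{\alpha_n}\}$, completing the calibre argument and hence the proof.
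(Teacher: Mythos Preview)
Your proof is correct and follows essentially the same route as the paper: both reduce to showing $\K(M)$ has calibre $(w(M)^+,\omega)$ by using that $\K(M)$ has weight (hence extent) at most $w(M)$ and that a Hausdorff-convergent sequence $K_{\alpha_n}\to K_\infty$ is bounded by $K_\infty\cup\bigcup_n K_{\alpha_n}$, then transfer the calibre along the Tukey quotient and invoke Theorem~\ref{cor:cpt_wt}. Your sequential-compactness verification that this union is compact is exactly the detail the paper leaves implicit (and in fact shows $C$ is already closed, so the closure is unnecessary).
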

\begin{proof}
First note that because every convergent sequence (say, $(K_n)_n$ converging to $K_\infty$) in $\K(M)$ is bounded (by $K_\infty \cup \bigcup_n K_n$) and every subset of $\K(M)$ of size strictly bigger than $w(M)$ has a proper limit point, we have that $\K(M)$ is calibre $(w(M)^+,\omega)$. 
Then, because $\K(M) \tq \K(X^2 \setminus \Delta)$ we see  $\mathcal{N}_{\Delta}^{X^2}$ is calibre $(w(M)^+,\omega)$. 
Now apply Theorem~\ref{cor:cpt_wt}.
\end{proof}

Sanchez's \cite[Problem~4.2]{Sanchez} remains an interesting open problem, however the next example gives a counter-example, at least consistently, to his remaining Problems~4.3-4.10.
\begin{exam} There are spaces $X$ and $Y$ with $\K(Y) \tq \K(X^2\setminus \Delta)$ (i.e. `$X$ has a strong $Y$-diagonal') such that:

(1) $X$ is a first countable compact space,  with $w(X)=\mathfrak{c}$, and $Y$ is $\sigma$-compact, first countable and cosmic;

(2) assuming $(\neg CH)$, taking $\kappa=\aleph_1$, $X$ is first countable and compact, $Y$ is the union of $\kappa$-many compact sets, and $nw(Y) \le  \kappa < w(X)$. 
\end{exam}
\begin{proof}
Let $Y$ be the bowtie space. Let $X$ be any first countable, compact space of weight $\mathfrak{c}$ (the double arrow space, for example, or the Alexandrov duplicate of $[0,1]$). 
Then $X$ is compact, first countable and, assuming $(\neg CH)$,  $w(X)=\mathfrak{c} > \aleph_1=\kappa$. While $Y$ is $\sigma$-compact (hence is the union of $\aleph_1$-many compact subsets), $nw(Y)=\aleph_0 \le \aleph_1$.
Since $X$ has weight $\mathfrak{c}$, by  Theorem~\ref{cor:cpt_wt} we know $\K(X^2\setminus \Delta)=_T [\mathfrak{c}]^{<\omega}$.
But from \cite{DowFeng} we know, as $Y$ is the bowtie space, $\K(Y) =_T [\mathfrak{c}]^{<\omega}$ Hence, $\K(Y) \tq \K(X^2\setminus \Delta)$, as claimed.
\end{proof}

\subsection{General Topological Groups}

Solving, negatively, Question~6.5 from \cite{Gab_etal} and Question~5.9 from \cite{Feng_Pbase} we offer the following example.
\begin{exam}
There is an Abelian topological group $G$, with identity $0$, such that all precompact subsets of $G$ are metrizable and $\chi(G)=\cof(\mathcal{N}_0^G)=\mathfrak{d}$ but for no separable metrizable space $M$ do we have $\K(M) \tq \mathcal{N}_0^G$, in particular, 
$\omega^\omega \not\tq \mathcal{N}_0^G$ ($G$ does not have an $\omega^\omega$-base).
\end{exam}
\begin{proof}
Let $P= \sum \omega^{\omega_1}$ (the $\Sigma$ product of $\omega_1$-many copies of $\omega$). Then $P$ is calibre $(\omega_1,\omega)$ \cite{GartMama}. And the cofinality of $P$ is $\mathfrak{d}$ (for each $\alpha < \omega_1$ take a cofinal family, $C_\alpha$, of size $\le\mathfrak{d}$ of $\alpha^\omega$, extend each element of $C_\alpha$ to have value $0$ for all $\beta \ge \alpha$, giving a $C_\alpha'$ contained in $\sum \omega^{\omega_1}$, and union together the $C_\alpha'$'s). 
For no separable metrizable $M$ \cite{GartMama} do we have $\K(M) \tq \sum \omega^{\omega_1}$, in particular not $\omega^\omega$. 

Now construct the topological group, $G=G_P$, as in Theorem~\ref{represent}(3). Corollary~\ref{cor:precompact_in_tg} guarantees that precompact subsets of $G$ are metrizable. 
\end{proof}



\end{document}